\numberwithin{equation}{section}
\newtheorem{teo}{Theorem}[section]
\newtheorem{prop}[teo]{Proposition}
\newtheorem{lem}[teo]{Lemma}
\newtheorem{cor}[teo]{Corollary}
\theoremstyle{definition}
\newtheorem{rem}[teo]{Remark}
\def\a{\alpha}
\def\b{\beta}
\def\o{\omega}
\def\R{\mathbb{R}}
\def\Z{\mathbb{Z}}
\def\N{\mathbb{N}}
\def\Q{\mathbb{Q}}
\def\d{\delta}
\def\e{\varepsilon}
\def\s{\sigma}
\def\mes{\text{mes}}
\def\pvar{V_p\,[\,V_p\,]}
\def\1var{V_1\,[\,V_1\,]}
\DeclareMathOperator*{\essup}{\text{ess}\sup}
\begin{document}

\title[]{On embeddings of spaces of bivariate functions of bounded $p$-variation}

\author{M. Lind}

\address{Department of Mathematics,
Karlstad University,
Universitetsgatan 2,
651 88 Karlstad,
SWEDEN}

\begin{abstract}
We obtain sharp estimates of the Hardy-Vitali type total $p$-variation of a function of two variables in terms of its mixed modulus of continuity in $L^p([0,1]^2)$. We also investigate various embeddings for mixed norm spaces of bivariate functions whose linear sections have bounded $p$-variation in the sense of Wiener. 
\end{abstract}

\keywords{Hardy-Vitali variation, $p$-variation, moduli of continuity, optimal constants, mixed norm spaces, embeddings}
\subjclass[2010]{Primary 26A45; Secondary 26B35, 46E35}

\maketitle

\section{Introduction}

Consider a 1-periodic function $f$ on the real line and let $1\le p<\infty$. A set $\Pi=\{x_0,x_1,...,x_n\}$ such that
$$
x_0<x_1<...<x_n=x_0+1
$$
will be called a \emph{partition}. 
Set 
\begin{equation}
\nonumber
v_p(f;\Pi)=\left(\sum_{k=0}^{n-1}|f(x_{k+1})-f(x_k)|^p\right)^{1/p}.
\end{equation}
We say that $f$ is of bounded $p$-variation (written $f\in V_p$) if
\begin{equation}
\label{Paper5WienerpvarDef}
v_p(f)=\sup_\Pi v_p(f;\Pi)<\infty,
\end{equation}
where the supremum is taken over all partitions $\Pi$. For $p=1$, this definition was given by Jordan, and for $p>1$ by Wiener \cite{Wi}. Subsequently, other extensions of the concept of bounded variation have been considered by many authors, see, e.g., \cite{Be1,MuOr1, Wa1}

We denote by $L^p([0,1]^n)~~(1\le p\le\infty)$ the class of all measurable functions $f$ on $\R^n$ that are 1-periodic in each variable and satisfy
$$
\|f\|_p=\left(\int_{[0,1]^n}|f(x)|^pdx\right)^{1/p}<\infty,\quad1\le p<\infty,
$$
or $\|f\|_\infty=\essup_{x\in[0,1]^n}|f(x)|<\infty$ for $p=\infty$.
For $f\in L^p([0,1]^n)$ and $h\in\R^n$, set
\begin{equation}
\label{Paper5DeltaIntr0}
\Delta(h)f(x)=f(x+h)-f(x).
\end{equation}
The \emph{$L^p$-modulus of continuity} of $f$ is defined as
\begin{equation}
\o(f;\d)_p=\sup_{|h|\le\d}\|\Delta(h)f\|_p,\quad0<\d\le1.
\end{equation}

In the one-dimensional case, the relationship between integral smoothness and variational properties of functions has been studied for a long time; we refer to \cite{KLi} and the references given therein. In \cite{KLi}, the following result was proved.
Let $f\in L^p([0,1])~~(1<p<\infty)$ and assume that
\begin{equation}
\label{Paper5onedimcond}
\int_0^1t^{-1/p}\o(f;t)_p\frac{dt}{t}<\infty.
\end{equation}
Then $f$ is equivalent to a continuous 1-periodic function $\bar{f}\in V_p$. Moreover, 
\begin{equation}
\label{Paper5KLiTeo1eq0}
\|f\|_\infty\le A\left[\|f\|_p+\frac{1}{pp'}\int_0^1t^{-1/p}\o(f;t)_p\frac{dt}{t}\right],
\end{equation}
and
\begin{equation}
\label{Paper5KLiTeo1eq1}
v_p(\bar{f})\le A\left[\o(f;1)_p+\frac{1}{pp'}\int_0^1t^{-1/p}\o(f;t)_p\frac{dt}{t}\right],
\end{equation}
where $A$ is an absolute constant and $p'=p/(p-1)$.

Inequalities of the type (\ref{Paper5KLiTeo1eq0}) and (\ref{Paper5KLiTeo1eq1}) were obtained much earlier by Geronimus \cite{Ge} and Terehin \cite{Te1} respectively. The novelty of the above estimates is that the constant coefficients in them have the sharp asymptotic behaviour as $p\rightarrow1$ and $p\rightarrow\infty$.

One of the main objectives of this paper is to to study the relations between integral smoothness and variational properties of functions of two variables. 

Several definitions of the concept of bounded variation in higher dimensions have been suggested, we refer to \cite{AC1} for an overview of some classical extensions. For more recent developments, see, e.g., \cite{DW1} and the references given therein.

A set $\mathcal{N}=\{(x_i,y_j):0\le i\le m,0\le j\le n\}$ of points in $\R^2$ such that
$$
x_0<x_1<...<x_m=x_0+1,\quad y_0<y_1<...<y_n=y_0+1,
$$
will be called a \emph{net}. Let $1\le p<\infty$ and let the function $f(x,y)$ be 1-periodic in both variables. For a fixed net $\mathcal{N}$, we denote
$$
\Delta f(x_i,y_j)=f(x_{i+1},y_{j+1})-f(x_{i+1},y_j)-f(x_i,y_{j+1})+f(x_i,y_j),
$$
for $0\le i\le m-1$, $0\le j\le n-1$, and 
$$
v_p^{(2)}(f;\mathcal{N})=\left(\sum_{i=0}^{m-1}\sum_{j=0}^{n-1}|\Delta f(x_i,y_j)|^p\right)^{1/p}.
$$
The space $V_p^{(2)}$ consists of all functions that satisfy
\begin{equation}
\label{Paper5VitaliDef1}
v_p^{(2)}(f)=\sup_\mathcal{N} v_p^{(2)}(f;\mathcal{N})<\infty,
\end{equation}
where the supremum is taken over all nets $\mathcal{N}$.

If $f(x,y)$ is 1-periodic in both variables and $x\in\R$ is fixed, then the $x$-section of $f$ is the 1-periodic function $f_x$ defined by
$$
f_x(y)=f(x,y),\quad y\in\R.
$$
The $y$-sections of $f$ are defined analogously. 

The space $H_p^{(2)}\subset V_p^{(2)}$ consists of all functions that, in addition to (\ref{Paper5VitaliDef1}), also satisfy the following conditions on their sections: for any $x,y\in\R$, we have $f_x,f_y\in V_p$. Observe that the class $H_p^{(2)}$ contains only bounded functions, while a function in $V_p^{(2)}$ may be unbounded.

For $p=1$, the definition (\ref{Paper5VitaliDef1}) was given by Vitali, and Hardy was the first who considered the class $H_1^{(2)}$ (see, e.g., \cite{AC1}). For $p>1$, the classes $V_p^{(2)}$ and $H_p^{(2)}$ were first defined and studied by Golubov \cite{G1}.

Let $s,t\in\R$. We shall use the following notation.
\begin{eqnarray}
\nonumber
\lefteqn{\Delta(s,t)f(x,y)=}\\
\label{Paper5DeltaIntr1}
&&f(x+s,y+t)-f(x+s,y)-f(x,y+t)+f(x,y).
\end{eqnarray}
For $1\le p<\infty$ and $f\in L^p([0,1]^2)$, the \emph{mixed $L^p$-modulus of continuity} is defined by
\begin{equation}
\nonumber
\o(f;u,v)_p=\sup_{0\le s\le u,\;0\le t\le v}\|\Delta(s,t)f\|_p.
\end{equation}

It was proved by Golubov \cite{G1} that if $f\in V_p^{(2)}~~(1\le p<\infty)$, then
\begin{equation}
\label{Paper5HardyLittIntr}
\o(f;u,v)_p\le v_p^{(2)}(f)u^{1/p}v^{1/p}.
\end{equation}
We prove (Theorem \ref{Paper5HardyLittlewood} below) that for $p=1$, the converse is also true: if $\o(f;u,v)_1=O(uv)$, then there exists $\bar{f}\in V_1^{(2)}$ such that $f=\bar{f}$ almost everywhere, and moreover,
$$
v_1^{(2)}(\bar{f})=\sup_{u,v>0}\frac{\o(f;u,v)_1}{uv}.
$$
This result is a two-dimensional analogue of a classical theorem of Hardy and Littlewood \cite[Theorem 24]{HL1}. 
For $p>1$, the condition $\o(f;u,v)_p=O(u^{1/p}v^{1/p})$ does not imply that $f$ is equivalent to some function in $V_p^{(2)}$ (this follows from corresponding result in one dimension).
 
Let $f\in L^p([0,1]^2)~~(1<p<\infty)$ and denote
$$
I_p(f)=\int_0^1\int_0^1(uv)^{-1/p}\o(f;u,v)_p\frac{du}{u}\frac{dv}{v},
$$
and 
$$
K_p(f)=\int_0^1t^{-1/p}[\o(f;t,1)_p+\o(f;1,t)_p]\frac{dt}{t}.
$$
Then we have $K_p(f)\le 4I_p(f)/p'$ (see (\ref{Paper5KpIp1}) below). One of the main results of this paper is the following: if $I_p(f)<\infty$ holds, then there exists $\bar{f}\in V_p^{(2)}$ such that $f=\bar{f}$ almost everywhere. Moreover, we have the estimate
\begin{equation}
\label{Paper5MainIntr1}
v_p^{(2)}(\bar{f})\le A\left[\o(f;1,1)_p+\frac{1}{pp'}K_p(f)+\left(\frac{1}{pp'}\right)^2I_p(f)\right],
\end{equation}
where $A$ is an absolute constant. We show that the constant coefficients in (\ref{Paper5MainIntr1}) have the optimal asymptotic behaviour as $p\rightarrow1$ or $p\rightarrow\infty$.

We also consider a two-dimensional analogue of (\ref{Paper5KLiTeo1eq0}). Potapov \cite{Pot1,Pot2,Pot3} obtained estimates of the $L^\infty$-norm of a function in terms of its mixed $L^p$-modulus of continuity (see also \cite{PST1}). However, the behaviour of the constant coefficients in these estimates were not investigated. We study this problem in Section 3 below. Observe first that for $f\in L^p([0,1]^2)~~(1<p<\infty)$, the condition $I_p(f)<\infty$ alone is not sufficient to ensure that $f\in L^\infty([0,1]^2)$. Indeed, if $f(x,y)=g(x,y)+\phi(x)$, then $I_p(f)=I_p(g)$, but $\phi$ is an arbitrary function (in particular, $\phi$ can be unbounded). However, we prove below that if we in addition to $I_p(f)<\infty$ also assume that
$$
J_p(f)=\int_0^1t^{-1/p}\o(f;t)_p\frac{dt}{t}<\infty,
$$
then $f$ is equal almost everywhere to a continuous function, and there exists an absolute constant $A>0$ such that
\begin{equation}
\label{Paper5MainIntr2}
\|f\|_\infty\le A\left[\|f\|_p+\frac{1}{pp'}J_p(f)+\left(\frac{1}{pp'}\right)^2I_p(f)\right],
\end{equation}
holds. In the same way as in the estimate (\ref{Paper5MainIntr1}), the constant coefficients of (\ref{Paper5MainIntr2}) are optimal.

Let $f$ be a given function and define the functions
$$
\varphi_1[f](x)= v_1(f_x)\quad{\rm  and}\quad\psi_1[f](y)=v_1(f_y).
$$
In \cite{AC1}, it was proved that if $f\in H_1^{(2)}$, then $\varphi_1[f],\psi_1[f]\in V_1$.
This result led us to consider the following mixed norm spaces. Let $f(x,y)$ be 1-periodic in both variables. For $1\le p<\infty$, define
$$
\varphi_p[f](x)=v_p(f_x)\quad{\rm and}\quad\psi_p[f](y)=v_p(f_y).
$$
We denote by $\pvar$ the collection of all functions $f$ such that
\begin{equation}
\label{Paper5VpVpDef}
W_p(f)=v_p(\varphi_p[f])+v_p(\psi_p[f])<\infty.
\end{equation}
The result of \cite{AC1} mentioned above states that the embedding $H_1^{(2)}\subset\1var$ holds. It is natural to consider the relation between $\pvar$ and $H_p^{(2)}$ for $p>1$. Here, the situation is different. We prove by direct constructions (see Proposition \ref{Paper5EasyProp} and Theorem \ref{Paper5VpVpTeo1}) that for $1<p<\infty$,
$$
\pvar\not\subset H_p^{(2)}\quad{\rm and}\quad H_p^{(2)}\not\subset\pvar.
$$
We also prove that if $I_p(f)<\infty$ and $J_p(f)<\infty$ holds, then $f$ is equal a.e. to a function $\bar{f}\in\pvar$ such that
$$
W_p(\bar{f})\le A\left[\o(f;1,1)_p+\frac{1}{pp'}K_p(f)+\left(\frac{1}{pp'}\right)^2I_p(f)\right],
$$
where $A$ is an absolute constant. This is a complement of inequality (\ref{Paper5MainIntr1}).

{\sc Acknowledgments.} This work was completed under the supervision of Professor V.I. Kolyada, to whom the author is very grateful.

%%%%%%%%%%%%%%%%%%%%%%%%%%%%%%%%%%%%%%%%%%%%%%%%%%%%%%%%%%%%%%%%%%%%%%%%%%%%%%%%%%%%%%%%%%%%%%%%%%%%%%%%%%%%%%%%%%%%%%%%%%%%%%%%%%%%%%%%%%%%%%%%%%%%%%%%%%%%%%%%%%%%%%%%%%%%%%

\section{Auxiliary results}

We collect some results on moduli of continuity.
Let $f\in L^p([0,1]^2)$. Then $\o(f;\d)_p$ is nondecreasing and
\begin{equation}
\label{Paper5Mod1}
\o(f;2\d)_p\le 2\o(f;\d)_p,\quad0\le\d\le1/2.
\end{equation}
Similarly, for a fixed $v\in[0,1]$, $\o(f;u,v)_p$ is nondecreasing in the first variable and
\begin{equation}
\label{Paper5Mod2}
\o(f;2u,v)_p\le2\o(f;u,v)_p,\quad0\le u\le1/2.
\end{equation}
Consequently, we have
\begin{equation}
\label{Paper5Mod3}
\frac{\o(f;u_1,v)_p}{u_1}\le 2\frac{\o(f;u_2,v)_p}{u_2},\quad 0<u_2\le u_1\le1.
\end{equation}
Similar relations hold with respect to the second variable $v$ for a fixed $u\in[0,1]$.

Let $h\in\R$, we shall use the following notations.
\begin{equation}
\label{Paper5Difference1}
\Delta_1(h)f(x,y)=f(x+h,y)-f(x,y)
\end{equation}
and
\begin{equation}
\label{Paper5Difference2}
\Delta_2(h)f(x,y)=f(x,y+h)-f(x,y).
\end{equation}
The mixed difference (\ref{Paper5DeltaIntr1}) can be written as an iterated difference
$$
\Delta(s,t)f(x,y)=\Delta_1(s)\Delta_2(t)f(x,y)=\Delta_1(s)\Delta_2(t)f(x,y).
$$
From here,
$$
\|\Delta(s,t)(\Delta_1(h)f)\|_p=\|\Delta_1(s)\Delta_1(h)\Delta_2(t)f\|_p.
$$
Applying the triangle inequality, we obtain the second estimate of the next lemma (the first inequality is proved similarly).
\begin{lem}
\label{Paper5ModulusLemma}
Let $f\in L^p([0,1]^2)~~(1\le p<\infty)$ and $h\in\R$. Then
\begin{equation}
\label{Paper5ModulusLemmaeq1}
\o(\Delta_1(h)f;\d)_p\le2\min\{\o(f;\d)_p,\o(f;h)_p\},
\end{equation}
and
\begin{equation}
\label{Paper5ModulusLemmaeq2}
\o(\Delta_1(h)f;u,v)_p\le2\min\{\o(f;u,v)_p,\o(f;h,v)_p\}.
\end{equation}
Similar estimates also hold if we consider $\Delta_2(h)f$. 
\end{lem}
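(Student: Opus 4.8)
The plan is to reduce everything to the one-dimensional estimate $\o(\Delta_1(h)f;\d)_p \le 2\o(f;\d)_p$ and its companion $\o(\Delta_1(h)f;\d)_p \le 2\o(f;h)_p$, then bootstrap to the mixed modulus. For the first (easier) inequality, I would write $\Delta_1(h)f(x+k) - \Delta_1(h)f(x) = [f(x+k+h)-f(x+h)] - [f(x+k)-f(x)]$ for $|k|\le\d$, which is $\Delta(k)f$ evaluated at two different base points; taking $L^p$-norms, translation invariance of $\|\cdot\|_p$ on $[0,1]^2$ gives $\|\Delta_1(h)f(\cdot+k)-\Delta_1(h)f\|_p \le \|\Delta(k)f(\cdot+h)\|_p + \|\Delta(k)f\|_p = 2\|\Delta(k)f\|_p \le 2\o(f;\d)_p$, and taking the supremum over $|k|\le\d$ finishes it. For the second bound I would instead regroup the same four-term expression as $[f(x+k+h)-f(x+k)] - [f(x+h)-f(x)] = \Delta_1(h)f(x+k) - \Delta_1(h)f(x)$ reorganized so that the relevant increment is by $h$, i.e.\ write it as $\Delta(h)f$-type differences with step $h$ and base points shifted by $k$ and $0$; this yields $\le 2\|\Delta(h)f\|_p \le 2\o(f;h)_p$. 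Combining the two bounds gives \eqref{Paper5ModulusLemmaeq1}.

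For the mixed inequality \eqref{Paper5ModulusLemmaeq2}, the key observation — already recorded in the excerpt — is the operator identity $\Delta(s,t)(\Delta_1(h)f) = \Delta_1(s)\Delta_1(h)\Delta_2(t)f$, since the finite-difference operators $\Delta_1(\cdot)$, $\Delta_1(\cdot)$, $\Delta_2(\cdot)$ all commute. From here I would expand $\Delta_1(s)\Delta_1(h)$ as a two-term difference: $\Delta_1(s)\Delta_1(h)g(x,y) = \Delta_1(h)g(x+s,y) - \Delta_1(h)g(x,y)$ with $g = \Delta_2(t)f$. Applying this with the grouping that keeps $\Delta_1(s)\Delta_2(t)f = \Delta(s,t)f$ intact, we get $\Delta(s,t)(\Delta_1(h)f)(x,y) = \Delta(s,t)f(x+h,y) - \Delta(s,t)f(x,y)$, and the triangle inequality together with translation invariance gives $\|\Delta(s,t)(\Delta_1(h)f)\|_p \le 2\|\Delta(s,t)f\|_p \le 2\,\o(f;u,v)_p$ for $0\le s\le u$, $0\le t\le v$. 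Alternatively, grouping so as to keep $\Delta_1(h)\Delta_2(t)f = \Delta(h,t)f$ intact yields $\Delta(s,t)(\Delta_1(h)f)(x,y) = \Delta(h,t)f(x+s,y) - \Delta(h,t)f(x,y)$, hence $\|\Delta(s,t)(\Delta_1(h)f)\|_p \le 2\|\Delta(h,t)f\|_p \le 2\,\o(f;h,v)_p$ (using that $t\le v$). Taking the minimum of the two bounds and then the supremum over admissible $s,t$ proves \eqref{Paper5ModulusLemmaeq2}.

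The case of $\Delta_2(h)f$ is handled by the same argument with the roles of the two variables interchanged, using commutativity of $\Delta_2(s)$ with $\Delta_2(h)$ and $\Delta_1(t)$. I do not anticipate a serious obstacle here: the proof is essentially bookkeeping with commuting difference operators and the translation invariance of the $L^p$-norm over the torus. The one point requiring a little care is making sure that in each regrouping the "surviving" composite difference operator is exactly one of $\Delta(s,t)$, $\Delta(h,t)$ (resp.\ $\Delta(t,h)$ in the symmetric case) so that the right-hand side is genuinely controlled by the stated modulus of continuity at the correct arguments — in particular that $s\le u$, $t\le v$, and $h$ appears in the slot matching $\o(f;h,v)_p$. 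Once the regroupings are set up correctly, the factor $2$ and the minimum both come out automatically.
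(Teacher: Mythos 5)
Your proposal is correct and follows essentially the same route as the paper: write $\Delta(s,t)(\Delta_1(h)f)$ as the iterated difference $\Delta_1(s)\Delta_1(h)\Delta_2(t)f$, regroup it either as $\Delta_1(h)$ applied to $\Delta(s,t)f$ or as $\Delta_1(s)$ applied to $\Delta(h,t)f$, and apply the triangle inequality with translation invariance of $\|\cdot\|_p$; the paper gives exactly this argument (in sketch form), and your one-dimensional case and the $\Delta_2(h)$ case are handled the same way. No gaps.
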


Let $f\in L^p([0,1]^2)~~(1<p<\infty)$. We recall the notations
\begin{equation}
\label{Paper5Integral1}
J_p(f)=\int_0^1t^{-1/p}\o(f;t)_p\frac{dt}{t},
\end{equation}
\begin{equation}
\label{Paper5Integral2}
K_p(f)=\int_0^1t^{-1/p}[\o(f;t,1)_p+\o(f;1,t)_p]\frac{dt}{t},
\end{equation}
and
\begin{equation}
\label{Paper5Integral3}
I_p(f)=\int_0^1\int_0^1(uv)^{-1/p}\o(f;u,v)_p\frac{du}{u}\frac{dv}{v}.
\end{equation}
Let $f\in L^p([0,1]^2)~~(1<p<\infty)$, then we have
\begin{equation}
\label{Paper5KpIp1}
K_p(f)\le\frac{4}{p'}I_p(f).
\end{equation}
Indeed, by (\ref{Paper5Mod3})
\begin{eqnarray}
\nonumber
I_p(f)&=&\int_0^1u^{-1/p-1}\left(\int_0^1v^{-1/p}\frac{\o(f;u,v)_p}{v}dv\right)du\\
\nonumber
&\ge&\frac{1}{2}\int_0^1u^{-1/p-1}\o(f;u,1)_pdu\int_0^1v^{-1/p}dv.
\end{eqnarray}
Thus,
$$
\int_0^1t^{-1/p}\o(f;t,1)_p\frac{dt}{t}\le\frac{2}{p'}I_p(f).
$$
Similarly, one shows
$$
\int_0^1t^{-1/p}\o(f;1,t)_p\frac{dt}{t}\le\frac{2}{p'}I_p(f),
$$
and (\ref{Paper5KpIp1}) follows. In the same way, one demonstrates that
\begin{equation}
\label{Paper5KpIp3}
\o(f;1,1)_p\le\frac{4}{(p')^2}I_p(f).
\end{equation}

Denote by $L^p_0([0,1]^2)$ the subspace of $L^p([0,1]^2)$ that consists of functions $f$ such that
$$
\int_0^1f(x,t)dt=\int_0^1f(t,y)dt=0
$$
for a.e. $x,y\in\R$. Observe that every function $f\in L^p([0,1]^2)$ can be written as
\begin{equation}
\label{Paper5Decomposition}
f(x,y)=\bar{f}(x,y)+\phi_1(x)+\phi_2(y),\quad{\rm a.e.}\quad(x,y)\in\R^2,
\end{equation}
where $\bar{f}\in L^p_0([0,1]^2)$. Indeed, let
\begin{equation}
\label{Paper5Phi1}
\phi_1(x)=\int_0^1f(x,t)dt,
\end{equation}
\begin{equation}
\label{Paper5Phi2}
\quad\phi_2(y)=\int_0^1f(t,y)dt-\iint_{[0,1]^2} f(s,t)dsdt.
\end{equation}
Then the function
$$
\bar{f}(x,y)=f(x,y)-\phi_1(x)-\phi_2(y)
$$
belongs to $L^p_0([0,1]^2)$.

It was proved in \cite{Pot3} that if $f\in L^p_0([0,1]^2)$, then 
$$
\o(f;\d)_p\le3[\o(f;\d,1)_p+\o(f;1,\d)_p],\quad 0\le\d\le1.
$$
Whence, it follows that if $f\in L^p_0([0,1]^2)~~(1<p<\infty)$, then
\begin{equation}
\label{Paper5KpIp2}
J_p(f)\le 3K_p(f).
\end{equation}

We shall also need some results on the $L^p$-modulus of continuity for a function of one variable.
The first result is well-known (see, e.g., \cite{K1} for a proof for functions on the real line; the proof in the periodic case is the same). 
\begin{lem}
\label{Paper5ViktorLemma}
Let $f\in L^p([0,1]),~~1\le p<\infty$. Then 
\begin{equation}
\label{Paper5ViktorLemmaeq1}
\o(f;\d)_p\le\frac{3}{\d}\int_0^\d\|\Delta(t)f\|_pdt,\quad\d\in(0,1],
\end{equation}
and consequently,
\begin{equation}
\label{Paper5ViktorLemmaeq2}
\int_0^1t^{-1/p}\o(f;t)_p\frac{dt}{t}\le 3\int_0^1t^{-1/p}\|\Delta(t)f\|_p\frac{dt}{t}.
\end{equation}
\end{lem}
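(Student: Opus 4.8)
The plan is to prove the pointwise bound $(\ref{Paper5ViktorLemmaeq1})$ first, for each fixed increment, and then deduce $(\ref{Paper5ViktorLemmaeq2})$ from it by interchanging an order of integration. Fix $0<h\le\d$. The starting point is the elementary averaging identity
$$
f(x+h)-f(x)=\frac1\d\int_0^\d\big[f(x+h)-f(x+s)\big]\,ds-\frac1\d\int_0^\d\big[f(x)-f(x+s)\big]\,ds,
$$
valid since the two integrands combine to the $s$-independent quantity $f(x+h)-f(x)$. Its virtue is that, after splitting and reparametrising, every difference it contains has the form $f(x+a)-f(x+b)$ with $0\le|a-b|\le\d$, so its $L^p$-norm is controlled by $\|\Delta(|a-b|)f\|_p$ with a shift at most $\d$.

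To exploit this, note that the $L^p$-norm (in $x$) of the second term on the right is at most $\d^{-1}\int_0^\d\|\Delta(s)f\|_p\,ds$. For the first term I would split the $s$-integral at $s=h$, writing $f(x+h)-f(x+s)=\Delta(h-s)f(x+s)$ on $[0,h]$ and $f(x+h)-f(x+s)=-\Delta(s-h)f(x+h)$ on $[h,\d]$; taking $L^p$-norms, using translation invariance of $\|\cdot\|_p$, and substituting $v=h-s$ and $w=s-h$ bounds this term by
$$
\frac1\d\int_0^h\|\Delta(v)f\|_p\,dv+\frac1\d\int_0^{\d-h}\|\Delta(w)f\|_p\,dw\le\frac2\d\int_0^\d\|\Delta(t)f\|_p\,dt.
$$
Adding the two estimates gives $\|\Delta(h)f\|_p\le\frac3\d\int_0^\d\|\Delta(t)f\|_p\,dt$ for all $0<h\le\d$; since $\|\Delta(-h)f\|_p=\|\Delta(h)f\|_p$ and $\Delta(0)f\equiv0$, taking the supremum over $|h|\le\d$ yields $(\ref{Paper5ViktorLemmaeq1})$. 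For $(\ref{Paper5ViktorLemmaeq2})$, substitute $(\ref{Paper5ViktorLemmaeq1})$ into its left-hand side and interchange the order of integration over the region $0<s<t<1$ (all integrands being non-negative):
$$
\int_0^1 t^{-1/p}\o(f;t)_p\frac{dt}{t}\le3\int_0^1\|\Delta(s)f\|_p\Big(\int_s^1 t^{-1/p-2}\,dt\Big)ds.
$$
Since $\int_s^1 t^{-1/p-2}\,dt=\frac{p}{p+1}\big(s^{-1/p-1}-1\big)\le s^{-1/p-1}$, the right-hand side is at most $3\int_0^1 s^{-1/p}\|\Delta(s)f\|_p\frac{ds}{s}$, which is $(\ref{Paper5ViktorLemmaeq2})$.

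The only delicate point is obtaining the sharp constant $3$ in $(\ref{Paper5ViktorLemmaeq1})$. A more pedestrian approach — comparing $f$ with its Steklov mean $f_\d(x)=\frac1\d\int_0^\d f(x+u)\,du$ and estimating $\o(f_\d;\d)_p$ through $\|f_\d'\|_p=\d^{-1}\|\Delta(\d)f\|_p$ — is workable but yields a larger constant, and iterating such an estimate does not in general converge, since $2^{n}\o(f;\d 2^{-n})_p$ need not stay bounded. The identity above is arranged so that each of the three integrals it produces is \emph{directly} majorised by $\d^{-1}\int_0^\d\|\Delta(t)f\|_p\,dt$, with no cross terms and with no appeal to monotonicity or subadditivity of $t\mapsto\|\Delta(t)f\|_p$, neither of which is available for a general periodic $f$.
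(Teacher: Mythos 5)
Your proof is correct. The paper does not actually prove this lemma — it only cites \cite{K1} and remarks that the periodic case is the same — but your averaging identity, with the split of the first integral at $s=h$ and translation invariance giving the two contributions $\int_0^h$ and $\int_0^{\delta-h}$ plus one more from the second term, is exactly the standard argument that yields the constant $3$, and the deduction of the second inequality by Fubini with $\int_s^1 t^{-1/p-2}\,dt\le s^{-1/p-1}$ is also correct.
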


Let $f\in L^p([0,1])~~(1\le p<\infty)$ and denote
\begin{equation}
\label{Paper5Omegap1}
\Omega_p(f)=\left(\int_0^1\int_0^1|f(x)-f(y)|^pdxdy\right)^{1/p}.
\end{equation}
Then
\begin{equation}
\label{Paper5Omegap2}
\Omega_p(f)\le\o(f;1)_p\le2\Omega_p(f),
\end{equation}
see \cite[p.589]{KLi}.

If $f(x,y)=g(x)h(y)$, then for all $p\ge1$, there holds
\begin{equation}
\label{Paper5Sharpness3}
v_p^{(2)}(f)=v_p(g)v_p(h),
\end{equation}
and
\begin{equation}
\label{Paper5Sharpness4}
\o(f;u,v)_p=\o(g;u)_p\o(h;v)_p,\quad u,v\in[0,1].
\end{equation}

Recall that when defining the class $H_p^{(2)}$ (see the Introduction), we require in addition to (\ref{Paper5VitaliDef1}) also that the sections $f_x,f_y\in V_p$ for \emph{all} $x,y\in\R$. However, it is sufficient to assume that that there exists at least two values $x_0,y_0\in\R$ such that $f(x_0,\cdot),f(\cdot,y_0)\in V_p$. Indeed, it is easy to show that for $p\ge1$ and any $x$-section $f_x$, we have
$$
v_p(f_x)\le v_p^{(2)}(f)+v_p(f_{x_0}),
$$ 
and similar inequalities hold for the $y$-sections. 

The next result is due to Golubov \cite{Go1}.
\begin{lem}
\label{Paper5PartialDerviativeLemma}
Assume that $f\in L^1_0([0,1]^2)$ and let
$$
F(x,y)=\int_0^x\int_0^yf(s,t)dsdt.
$$
Then
\begin{equation}
\label{Paper5PartialDerivative}
v_1^{(2)}(F)=\int_0^1\int_0^1|f(x,y)|dxdy.
\end{equation}
\end{lem}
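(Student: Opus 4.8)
The plan is to prove the identity $v_1^{(2)}(F)=\iint_{[0,1]^2}|f|$ by establishing the two inequalities separately.

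For the upper bound $v_1^{(2)}(F)\le\iint|f|$, I would take an arbitrary net $\mathcal{N}=\{(x_i,y_j)\}$ and compute the mixed difference $\Delta F(x_i,y_j)$. Because $F$ is a double integral of $f$, one has the clean formula
\[
\Delta F(x_i,y_j)=\int_{x_i}^{x_{i+1}}\int_{y_j}^{y_{j+1}}f(s,t)\,ds\,dt,
\]
which follows directly from writing out the four corner terms and cancelling. Hence $|\Delta F(x_i,y_j)|\le\int_{x_i}^{x_{i+1}}\int_{y_j}^{y_{j+1}}|f(s,t)|\,ds\,dt$, and summing over $i,j$ gives $v_1^{(2)}(F;\mathcal{N})\le\iint_{[0,1]^2}|f|$ (using that the rectangles $[x_i,x_{i+1}]\times[y_j,y_{j+1}]$ tile one period, together with 1-periodicity of $f$ and the mean-zero condition $f\in L^1_0$ so that shifting the base point $x_0,y_0$ does not change the integral over a full period). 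Taking the supremum over $\mathcal{N}$ yields the upper bound.

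For the lower bound $v_1^{(2)}(F)\ge\iint|f|$, the idea is to approximate $\mathrm{sgn}\,f$ by a step function adapted to a fine net. Given $\e>0$, choose (by density of step functions in $L^1$, or by the Lebesgue differentiation theorem) a net so fine that on most cells the sign of $\int_{x_i}^{x_{i+1}}\int_{y_j}^{y_{j+1}}f$ captures the bulk of $\int\int|f|$ over that cell; more precisely, using that $f\in L^1$ one can pick a grid for which
\[
\sum_{i,j}\left|\int_{x_i}^{x_{i+1}}\int_{y_j}^{y_{j+1}}f(s,t)\,ds\,dt\right|\ge\iint_{[0,1]^2}|f|-\e.
\]
This is exactly the statement that the $L^1$-norm is the supremum over such "cell averages" of absolute values, which holds because simple functions constant on grid cells are dense in $L^1$ and the conditional expectation onto the grid $\sigma$-algebra has norm one and converges to the identity. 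The left-hand side is precisely $v_1^{(2)}(F;\mathcal{N})\le v_1^{(2)}(F)$, and letting $\e\to0$ finishes the proof.

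The main obstacle is the lower bound: one must argue carefully that the martingale/step-function approximation of $f$ can be realized by an honest net (with the periodicity constraint $x_m=x_0+1$, $y_n=y_0+1$) and that the approximation error is uniformly controllable. The cleanest route is to invoke that for $f\in L^1([0,1]^2)$ the dyadic (or arbitrary refining) conditional expectations $E_k f$ satisfy $\|E_k f\|_1\to\|f\|_1$ is false in general — rather one needs $\|E_k f\|_1\le\|f\|_1$ always and $\sum_{\text{cells}}|\int_{\text{cell}}f|=\|E_k f\|_1\to\|f\|_1$, which does hold since $E_k f\to f$ in $L^1$. Packaging this correctly, while respecting that $F$ and the net live on the torus, is where the real care is needed; the upper bound is essentially a one-line telescoping computation.
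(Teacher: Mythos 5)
Your proof is correct. The paper states this lemma without proof, attributing it to Golubov, so there is no in-paper argument to compare against; what you give is the standard one: $\Delta F(x_i,y_j)=\int_{x_i}^{x_{i+1}}\int_{y_j}^{y_{j+1}}f$ by telescoping, the triangle inequality gives the upper bound, and for the lower bound a refining sequence of (say dyadic) nets $\mathcal{N}_k$ gives $v_1^{(2)}(F;\mathcal{N}_k)=\|E_kf\|_1\to\|f\|_1$, since the grid conditional expectations are $L^1$-contractions converging to $f$ in $L^1$. One small point: in the upper bound, the mean-zero hypothesis $f\in L^1_0$ is not what allows you to replace the period rectangle $[x_0,x_0+1]\times[y_0,y_0+1]$ by $[0,1]^2$ (periodicity of $|f|$ alone does that); its role is to make $F$ itself 1-periodic in each variable, as the paper's remark after the lemma notes, so that $v_1^{(2)}(F)$ over periodic nets is the right quantity to consider.
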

\begin{rem}
The condition $f\in L^1_0([0,1]^2)$ is imposed to assure that $F$ is 1-periodic in both variables. 
\end{rem}

We shall also need the following lemma, which is a special case of a Helly-type principle proved in \cite{Leo1}.
\begin{lem}
\label{Paper5HellyLemma}
Let $\{f_n\}$ be a sequence of functions in $H_1^{(2)}$. Assume that there exist $x_0,y_0\in\R$ and $M>0$ such that the estimate
$$
v_1^{(2)}(f_n)+v_1(f_n(\cdot,y_0))+v_1(f_n(x_0,\cdot))+|f_n(x_0,y_0)|\le M
$$
holds uniformly in $n$. Then there exists a subsequence $\{f_{n_j}\}$ that converges at every point to a function $f\in H_1^{(2)}$.
\end{lem}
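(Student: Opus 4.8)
The plan is to reduce the two-dimensional selection problem to the classical one-dimensional Helly selection theorem together with a Helly-type principle for bivariate \emph{$2$-increasing} functions, i.e.\ functions whose mixed difference $\Delta f(x_i,y_j)$ over every rectangle is nonnegative.

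First I would normalize. Write, for each $n$,
\[
f_n(x,y)=g_n(x,y)+u_n(x)+w_n(y),
\]
where $g_n(x,y)=f_n(x,y)-f_n(x,y_0)-f_n(x_0,y)+f_n(x_0,y_0)$, $u_n(x)=f_n(x,y_0)-f_n(x_0,y_0)$ and $w_n(y)=f_n(x_0,y)$. The subtracted part $f_n(x,y_0)+f_n(x_0,y)-f_n(x_0,y_0)$ has the form $\phi(x)+\psi(y)$, so its mixed difference over every rectangle vanishes; hence $v_1^{(2)}(g_n)=v_1^{(2)}(f_n)\le M$. Moreover $g_n$ vanishes on the lines $x=x_0$ and $y=y_0$, and from $v_1(f_x)\le v_1^{(2)}(f)+v_1(f_{x_0})$ (recalled above) every section of $g_n$ has $V_1$-variation at most $3M$. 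The one-dimensional pieces are controlled directly: $v_1(u_n)=v_1(f_n(\cdot,y_0))\le M$ with $u_n(x_0)=0$, so $\|u_n\|_\infty\le M$, and likewise $v_1(w_n)\le M$ with $|w_n(y_0)|\le M$. By the classical Helly theorem and a diagonal argument we pass to a subsequence along which $u_n\to u$ and $w_n\to w$ pointwise everywhere, with $u,w\in V_1$.

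It remains to extract a pointwise convergent subsequence from $\{g_n\}$. For this I would invoke the two-dimensional analogue of the Jordan decomposition: a function of bounded Hardy--Vitali variation that vanishes on the two normalizing lines splits as $g_n=P_n-Q_n$, where $P_n,Q_n$ are $2$-increasing, vanish on $\{x=x_0\}\cup\{y=y_0\}$, are nondecreasing in each variable separately (separate monotonicity is forced by the vanishing on the axes combined with the $2$-increasing property), and satisfy $0\le P_n,Q_n\le v_1^{(2)}(g_n)\le M$. Thus it suffices to establish a Helly principle for a uniformly bounded sequence $\{P_n\}$ of $2$-increasing, separately monotone functions on $[x_0,x_0+1]^2$. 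By a diagonal argument choose a subsequence converging at every point of $\Q^2\cap[x_0,x_0+1]^2$ to a limit $\bar P$, which inherits monotonicity in each variable and the $2$-increasing property. A monotone squeeze ($P_n(x',y')\le P_n(x,y)\le P_n(x'',y'')$ for rational $(x',y')\le(x,y)\le(x'',y'')$) then forces $P_n(x,y)\to\bar P(x,y)$ at every point of joint continuity of $\bar P$. The key structural fact is that the discontinuity set of $\bar P$ is contained in countably many horizontal and vertical lines: the jump of a partial section $x\mapsto\bar P(x,y)$ at a point $c$ is dominated by the jump of the full section $x\mapsto\bar P(x,x_0+1)$ at $c$ --- an immediate consequence of the $2$-increasing inequality --- so $x$-discontinuities occur only on the countably many lines $x=c$ at which the bounded monotone function $\bar P(\cdot,x_0+1)$ jumps, and symmetrically for $y$. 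Along each such exceptional line the restrictions $P_n(c,\cdot)$ form a uniformly bounded monotone sequence, so a further diagonalization over these countably many lines, via one-dimensional Helly, produces a subsequence converging along all of them; combined with the previous regime this gives $P_n\to\bar P$ everywhere on the square. Applying the same to $\{Q_n\}$ and retaining the subsequence already fixed for $\{u_n\},\{w_n\}$ yields one subsequence $\{f_{n_j}\}$ converging everywhere to $f:=(P-Q)+u+w$.

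Finally I would verify $f\in H_1^{(2)}$ by lower semicontinuity. For any net $\mathcal N$, $\sum_{i,j}|\Delta f(x_i,y_j)|=\lim_j\sum_{i,j}|\Delta f_{n_j}(x_i,y_j)|\le\liminf_j v_1^{(2)}(f_{n_j})\le M$, so $v_1^{(2)}(f)\le M$; and for any $x$, $v_1(f_x)\le\liminf_j v_1(f_{n_j}(x,\cdot))\le 2M$ in the same way, using the section bound, with the analogous statement for the $y$-sections. Hence $f\in H_1^{(2)}$. I expect the third step to be the main obstacle --- specifically, proving that the limit $\bar P$ of $2$-increasing functions has its discontinuities confined to countably many lines, and arranging the diagonalization so that convergence holds at \emph{every} point of $[x_0,x_0+1]^2$ (over $\Q^2$ and over the exceptional lines simultaneously); the one-dimensional reductions and the closing semicontinuity argument are routine.
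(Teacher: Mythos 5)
Your argument is sound, but it is worth pointing out that the paper does not prove this lemma at all: it is quoted as a special case of the multidimensional Helly-type selection principle of Leonov \cite{Leo1}, so there is no internal proof to compare against. What you have written is a self-contained substitute, and the route you take --- normalize by subtracting $f_n(x,y_0)+f_n(x_0,y)-f_n(x_0,y_0)$ so that the remainder $g_n$ vanishes on the two axes, split off the one-dimensional parts by classical Helly, decompose $g_n=P_n-Q_n$ into $2$-increasing parts, and run the monotone squeeze over $\Q^2$ plus a diagonalization over the countably many discontinuity lines --- is the classical proof of the bivariate Helly principle and is correct, including the closing lower-semicontinuity step (which correctly uses $v_1(f_n(x,\cdot))\le v_1^{(2)}(f_n)+v_1(f_n(x_0,\cdot))\le 2M$ to get the section bounds required for membership in $H_1^{(2)}$). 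The two ingredients you assert rather than prove are genuine but standard: the bivariate Jordan decomposition follows by taking the positive and negative variations $\mu^{\pm}$ of the additive rectangle function $R\mapsto\Delta g_n(R)$ and setting $P_n(x,y)=\mu^{+}([x_0,x]\times[y_0,y])$ --- one should note that $\mu^{+}$ is \emph{additive} over grid subdivisions (not merely superadditive), which is what makes $P_n$ genuinely $2$-increasing and gives $0\le P_n,Q_n\le v_1^{(2)}(g_n)$; and the confinement of the discontinuities of $\bar P$ to countably many lines follows exactly as you say from $0\le\bar P(x'',y)-\bar P(x',y)\le\bar P(x'',x_0+1)-\bar P(x',x_0+1)$. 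Your elementary argument buys a proof from first principles using only one-dimensional Helly; citing Leonov buys generality (arbitrary dimension and more general variations) at the cost of an external reference.
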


%%%%%%%%%%%%%%%%%%%%%%%%%%%%%%%%%%%%%%%%%%%%%%%%%%%%%%%%%%%%%%%%%%%%%%%%%%%%%%%%%%%%%%%%%%%%%%%%%%%%%%%%%%%%%%%%%%%%%%%%%%%%%%%%%%%%%%%%%%%%%%%%%%%%%%%%%%%%%%%%%%%%%%%%%%%%

\section{Estimates of the $L^\infty$-norm}
Below, we shall use the notations (\ref{Paper5Integral1}) and (\ref{Paper5Integral3}).
\begin{teo}
\label{Paper5ConvergenceTeo}
Let $f\in L^p([0,1]^2)~~(1<p<\infty)$ and suppose that 
\begin{equation}
\label{Paper5ConvergenceTeoeq0}
J_p(f)<\infty\quad{\rm and}\quad I_p(f)<\infty.
\end{equation}
Then $f$ is equal a.e. to a continuous function and 
\begin{equation}
\label{Paper5ConvergenceTeoeq1}
\|f\|_\infty\le A\left[\|f\|_p+\frac{1}{pp'}J_p(f)+\left(\frac{1}{pp'}\right)^2I_p(f)\right],
\end{equation}
where $A$ is an absolute constant.
\end{teo}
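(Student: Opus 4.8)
The plan is to first reduce to the case $f\in L^p_0([0,1]^2)$ and then build the pointwise bound by a two-step dyadic "telescoping" argument, mimicking the one-dimensional result (\ref{Paper5KLiTeo1eq0}) but iterated once in each variable. Using the decomposition (\ref{Paper5Decomposition}), write $f=\bar f+\phi_1(x)+\phi_2(y)$ with $\bar f\in L^p_0$. The functions $\phi_1,\phi_2$ are, up to constants, the integral averages (\ref{Paper5Phi1})--(\ref{Paper5Phi2}) of $f$, so $\omega(\phi_1;t)_p\le\omega(f;t,1)_p$ and similarly for $\phi_2$; hence the one-dimensional theorem (\ref{Paper5KLiTeo1eq0}) applies to each $\phi_i$ and produces $\|\phi_i\|_\infty\le A[\|f\|_p+\tfrac1{pp'}K_p(f)]$, which is dominated by the right-hand side of (\ref{Paper5ConvergenceTeoeq1}) after invoking $K_p(f)\le\tfrac4{p'}I_p(f)$. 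So it suffices to estimate $\|\bar f\|_\infty$, and for that I may freely use $J_p(\bar f)\le 3K_p(\bar f)\le\tfrac{12}{p'}I_p(\bar f)$ from (\ref{Paper5KpIp2}) and (\ref{Paper5KpIp1}).

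For $\bar f\in L^p_0$, the core estimate is a pointwise bound at Lebesgue points. Fix a Lebesgue point $(x,y)$ of $\bar f$. The idea is: approximate $\bar f(x,y)$ by averages over shrinking dyadic rectangles $Q_k=[x,x+2^{-k}]\times[y,y+2^{-k}]$, and control the difference of consecutive averages by the mixed modulus. More precisely, let $A_k(x,y)=2^{2k}\iint_{Q_k}\bar f$. A standard computation shows $|A_k-A_{k+1}|$ is bounded by an average of $|\Delta(s,t)\bar f|$-type quantities over $s,t\in[0,2^{-k}]$, and by Hölder's inequality (in the $p,p'$ pair, which is where the factor $(p')^{-1}$ per integration enters) this is $\lesssim 2^{k/p'}\omega(\bar f;2^{-k},2^{-k})_p\cdot 2^{k/p'}=2^{2k/p'-2k}\cdot(\text{stuff})$... more carefully, each of the two spatial averages contributes a factor $(p')^{-1}2^{k/p'}$, giving $|A_k-A_{k+1}|\lesssim (p')^{-2}2^{2k/p}\omega(\bar f;2^{-k},2^{-k})_p$; summing over $k$ and comparing the series $\sum_k 2^{2k/p}\omega(\bar f;2^{-k},2^{-k})_p$ with the integral $I_p(\bar f)$ (using monotonicity (\ref{Paper5Mod2}) of the modulus to pass between sum and integral, which also accounts for the extra $p^{-1}$ factors from the $du/u$, $dv/v$ weights near the endpoint) yields $\sum_k|A_k-A_{k+1}|\lesssim(pp')^{-2}I_p(\bar f)$. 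Since $A_0=\iint_{[0,1]^2}\bar f=0$ and $A_k\to\bar f(x,y)$ at the Lebesgue point, we get $|\bar f(x,y)|\le\sum_{k\ge0}|A_k-A_{k+1}|\lesssim\|\bar f\|_p+(pp')^{-2}I_p(\bar f)$, where the $\|\bar f\|_p$-term absorbs the crude estimate of $|A_1-A_0|$ or simply $|A_0|$; actually since $A_0=0$ we only need to start the telescoping bound and $\|\bar f\|_p\le\|f\|_p+\|\phi_1\|_\infty+\|\phi_2\|_\infty$.

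Once the pointwise bound holds at a.e.\ point, continuity of (the good representative of) $\bar f$ — hence of $f$ — follows by a routine argument: the dyadic averages $A_k$ converge uniformly (the telescoping bound is uniform in $(x,y)$ since $I_p$ does not depend on the base point), each $A_k$ is continuous, so the limit is continuous and equals $\bar f$ a.e. Combining with the bounds on $\|\phi_i\|_\infty$ gives (\ref{Paper5ConvergenceTeoeq1}). The main obstacle I anticipate is bookkeeping the constants to get exactly the factors $\tfrac1{pp'}$ on $J_p$ and $\big(\tfrac1{pp'}\big)^2$ on $I_p$: each dyadic summation against a geometric-type weight produces a factor comparable to $p$ near $p\to1$ (from $\sum 2^{-k\cdot 0^+}$ behaving like $1/\log 2^{1/p'}\sim p'$) and to $p'$ near $p\to\infty$, and one has to be careful that Hölder is applied so that the "missing" exponent is split correctly. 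The cleanest route is probably to prove the one-variable-iterated version directly: first apply (\ref{Paper5KLiTeo1eq0})-type reasoning in $y$ for a.e.\ fixed $x$ to replace $\bar f$ by its $y$-regularized version controlled by $J$-type integrals of sections, then iterate in $x$; but tracking which modulus appears (the mixed one $\omega(f;u,v)_p$ versus the one-dimensional $\omega(f;\cdot)_p$ of a section) is the delicate point, and the auxiliary Lemma \ref{Paper5ModulusLemma} together with (\ref{Paper5ViktorLemmaeq1})--(\ref{Paper5ViktorLemmaeq2}) is exactly what is designed to handle that interchange.
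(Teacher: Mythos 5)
Your overall strategy (split off $\phi_1,\phi_2$ via (\ref{Paper5Decomposition}), then prove a second-order bound for the remaining part in $L^p_0$) is in the right spirit, and the iterated one-variable route you gesture at in your last sentences is in fact what the paper does: it applies (\ref{Paper5KLiTeo1eq0}) together with (\ref{Paper5ViktorLemmaeq2}) to each section $f_x$ to get $\|f_x\|_\infty\le A\Phi(x)$ with $\Phi(x)=\|f_x\|_p+\frac{1}{pp'}\int_0^1v^{-1/p-1}\|\Delta(v)f_x\|_p\,dv$, and then applies the same one-dimensional inequality once more to $\Phi$ itself, using Minkowski's inequality and the bounds $\|\Delta(u)\b_v\|_p\le\o(f;u,v)_p$ to convert everything into $\|f\|_p$, $J_p(f)$ and $I_p(f)$. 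However, the argument you actually carry out --- simultaneous dyadic square averages $A_k$ with $|A_k-A_{k+1}|$ controlled by the mixed modulus --- has a genuine gap. Writing $A_k$ as the average of the four translated copies of $A_{k+1}$, the difference $A_{k+1}-A_k$ involves, besides the mixed difference $\Delta(h,h)f$, the pure partial differences $\Delta_1(h)f$ and $\Delta_2(h)f$ (with $h=2^{-k-1}$); these are not of mixed type and are invisible to $\o(f;u,v)_p$ and hence to $I_p(f)$. Worse, even for those terms the two-dimensional H\"older step is too lossy: averaging $|\Delta_1(h)f|$ over the square $[0,2^{-k-1}]^2$ and applying H\"older produces a bound of order $2^{2k/p}\o(f;2^{-k})_p$, and the series $\sum_k 2^{2k/p}\o(f;2^{-k})_p$ is not controlled by $J_p(f)\approx\sum_k 2^{k/p}\o(f;2^{-k})_p$. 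This loss of a factor $2^{k/p}$ is exactly why the two variables must be treated one at a time, as in the paper's proof.

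A second, smaller error: the inequality $\o(\phi_1;t)_p\le\o(f;t,1)_p$ and the resulting bound $\|\phi_i\|_\infty\le A[\|f\|_p+\frac{1}{pp'}K_p(f)]$ cannot be right, since (as the Introduction points out) adding an arbitrary function of $x$ alone to $f$ changes neither the mixed modulus nor $K_p(f)$, but certainly changes $\|\phi_1\|_\infty$. What is true is $\o(\phi_1;t)_p\le\o(f;t)_p$ (Minkowski applied to the average defining $\phi_1$), so the contribution of $\phi_1,\phi_2$ is controlled by $\|f\|_p+\frac{1}{pp'}J_p(f)$; this is where the hypothesis $J_p(f)<\infty$ is genuinely needed, and the final estimate is unaffected. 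Your idea of deducing continuity from uniform convergence of the continuous averages $A_k$ is fine in principle but inherits the gap above; the paper instead shows $\o(f;\d)_\infty\rightarrow0$ by applying the already-proved estimate (\ref{Paper5ConvergenceTeoeq1}) to $\Delta_1(h)f$ and $\Delta_2(h)f$ and invoking Lemma \ref{Paper5ModulusLemma}.
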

\begin{proof}
Assume that (\ref{Paper5ConvergenceTeoeq0}) holds, we shall first prove the estimate (\ref{Paper5ConvergenceTeoeq1}). For each $x\in[0,1]$, we apply (\ref{Paper5KLiTeo1eq0}) to the $x$-section $f_x$. Using also (\ref{Paper5ViktorLemmaeq2}), we have
\begin{equation}
\label{Paper5ConvergenceTeoeq2}
\|f_x\|_\infty\le A\left[\|f_x\|_p+\frac{1}{pp'}\int_0^1v^{-1/p-1}\|\Delta(v)f_x\|_pdv\right],
\end{equation}
where $\Delta(v)f_x(y)=f(x,y+v)-f(x,y)$. Put
\begin{equation}
\label{Paper5AlfaBeta1}
\a(x)=\|f_x\|_p,\quad \b_v(x)=\|\Delta(v)f_x\|_p,
\end{equation}
and
\begin{equation}
\label{Paper5PhiDef}
\Phi(x)=\a(x)+\frac{1}{pp'}\int_0^1v^{-1/p-1}\b_v(x)dv.
\end{equation}
By (\ref{Paper5ConvergenceTeoeq2})
\begin{equation}
\label{Paper5ConvergenceTeoeq3}
\|f\|_\infty=\essup_{0\le x\le1}\|f_x\|_\infty\le A\essup_{0\le x\le1}\Phi(x).
\end{equation}
We shall estimate $\|\Phi\|_\infty$. By (\ref{Paper5KLiTeo1eq0}) and (\ref{Paper5ViktorLemmaeq2}), we have
\begin{equation}
\label{Paper5ConvergenceTeoeq4}
\|\Phi\|_\infty\le A\left[\|\Phi\|_p+\frac{1}{pp'}\int_0^1u^{-1/p-1}\|\Delta(u)\Phi\|_pdu\right],
\end{equation}
where $\Delta(u)\Phi(x)=\Phi(x+u)-\Phi(x)$. It follows easily from the definitions (\ref{Paper5AlfaBeta1}) that
\begin{equation}
\label{Paper5Delta1}
\|\a\|_p=\|f\|_p,\quad\|\Delta(u)\a\|_p\le\o(f;u)_p,
\end{equation}
and
\begin{equation}
\label{Paper5Delta2}
\|\b_v\|_p\le\o(f;v)_p,\quad\|\Delta(u)\b_v\|_p\le\o(f;u,v)_p.
\end{equation}
We estimate both terms of (\ref{Paper5ConvergenceTeoeq4}), starting with $\|\Phi\|_p$. By Minkowski's inequality and the left inequalities of (\ref{Paper5Delta1}) and (\ref{Paper5Delta2}), we get
\begin{eqnarray}
\nonumber
\|\Phi\|_p&\le&\|f\|_p+\frac{1}{pp'}\left(\int_0^1\left(\int_0^1v^{-1/p-1}\b_v(x)dv\right)^pdx\right)^{1/p}\\
\nonumber
&\le&\|f\|_p+\frac{1}{pp'}\int_0^1v^{-1/p-1}\left(\int_0^1\b_v(x)^pdx\right)^{1/p}dv\\
\label{Paper5ConvergenceTeoeq5}
&\le&\|f\|_p+\frac{1}{pp'}J_p(f).
\end{eqnarray}
We proceed to estimate $\|\Delta(u)\Phi\|_p$. Put
$$
I(x)=\int_0^1v^{-1/p-1}\b_v(x)dv.
$$
Then, by Minkowski's inequality and the right inequality of (\ref{Paper5Delta1})
\begin{eqnarray}
\nonumber
\|\Delta(u)\Phi\|_p&\le&\|\Delta(u)\a\|_p+\frac{1}{pp'}\|\Delta(u)I\|_p\\
\label{Paper5ConvergenceTeoeq6}
&\le&\o(f;u)_p+\frac{1}{pp'}\|\Delta(u)I\|_p. 
\end{eqnarray}
Further, since
$$
|I(x+u)-I(x)|\le\int_0^1v^{-1/p-1}|\b_v(x+u)-\b_v(x)|dv,
$$
we get after applying Minkowski's inequality that
\begin{eqnarray}
\nonumber
\|\Delta(u)I\|_p&\le&\left(\int_0^1\left(\int_0^1v^{-1/p-1}|\Delta(u)\b_v(x)|dv\right)^pdx\right)^{1/p}\\
\label{Paper5ConvergenceTeoeq7}
&\le&\int_0^1v^{-1/p-1}\|\Delta(u)\b_v\|_pdv.
\end{eqnarray}
By (\ref{Paper5ConvergenceTeoeq6}), (\ref{Paper5ConvergenceTeoeq7}) and the right inequality of (\ref{Paper5Delta2}), we have
\begin{equation}
\label{Paper5ConvergenceTeoeq8}
\|\Delta(u)\Phi\|_p\le\o(f;u)_p+\frac{1}{pp'}\int_0^1v^{-1/p-1}\o(f;u,v)_pdv.
\end{equation}
Now, (\ref{Paper5ConvergenceTeoeq1}) follows from (\ref{Paper5ConvergenceTeoeq3}), (\ref{Paper5ConvergenceTeoeq4}), (\ref{Paper5ConvergenceTeoeq5}) and (\ref{Paper5ConvergenceTeoeq8}).

We now prove that $f$ agrees a.e. with a continuous function. To do this, it is sufficient to show that $\o(f;\d)_\infty\rightarrow0$ as $\d\rightarrow0$. Fix $\d\in(0,1]$, then
$$
\o(f;\d)_\infty\le\sup_{0\le h\le\d}\|\Delta_1(h)f\|_\infty+\sup_{0\le h\le\d}\|\Delta_2(h)f\|_\infty,
$$
where $\Delta_1(h)f, \Delta_2(h)f$ are defined by (\ref{Paper5Difference1}) and (\ref{Paper5Difference2}) respectively.
For $h\in(0,\d]$, we have by (\ref{Paper5ConvergenceTeoeq1}) that
$$
\|\Delta_1(h)f\|_\infty\le A\left[\|\Delta_1(h)f\|_p+\frac{1}{pp'}J_p(\Delta_1(h)f)+\left(\frac{1}{pp'}\right)^2I_p(\Delta_1(h)f)\right].
$$
By using Lemma \ref{Paper5ModulusLemma}, (\ref{Paper5Mod1}) and (\ref{Paper5Mod2}), we get for any $0<h\le\d$
$$
J_p(\Delta_1(h)f)\le c\int_0^\d t^{-1/p}\o(f;t)_p\frac{dt}{t},
$$
and
$$
I_p(\Delta_1(h)f)\le c\int_0^\d\int_0^1(uv)^{-1/p}\o(f;u,v)_p\frac{dv}{v}\frac{du}{u},
$$
for some constant $c$ that is independent of $\d$. It follows that
$$
\lim_{\d\rightarrow0}(\sup_{0\le h\le\d}\|\Delta_1(h)f\|_\infty)=0.
$$
In exactly the same way, we can show that
$$
\lim_{\d\rightarrow0}(\sup_{0\le h\le\d}\|\Delta_2(h)f\|_\infty)=0.
$$
Hence, $\lim_{\d\rightarrow0}\o(f;\d)_\infty=0$. This concludes the proof.
\end{proof}
\begin{cor}
\label{Paper5Cor1}
Let $f\in L^p([0,1]^2)\;(1<p<\infty)$ and assume that $I_p(f)<\infty$. Then there exist a continuous function $g\in L^p_0([0,1]^2)$ and univariate functions $\phi_1,\phi_2$ such that
$$
f(x,y)=g(x,y)+\phi_1(x)+\phi_2(y),
$$
for a.e. $(x,y)\in\R^2$.
\end{cor}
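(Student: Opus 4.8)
The plan is to combine the decomposition \eqref{Paper5Decomposition} with Theorem \ref{Paper5ConvergenceTeo}. Given $f\in L^p([0,1]^2)$ with $I_p(f)<\infty$, write $f(x,y)=g(x,y)+\phi_1(x)+\phi_2(y)$ a.e., where $g\in L^p_0([0,1]^2)$ is defined via \eqref{Paper5Phi1}--\eqref{Paper5Phi2}. The key point is that the mixed modulus of continuity is unaffected by adding functions of a single variable: since $\Delta(s,t)\phi_1(x)=0$ and $\Delta(s,t)\phi_2(y)=0$ by \eqref{Paper5DeltaIntr1}, we have $\o(g;u,v)_p=\o(f;u,v)_p$ for all $u,v$, and hence $I_p(g)=I_p(f)<\infty$.

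To apply Theorem \ref{Paper5ConvergenceTeo} to $g$ we also need $J_p(g)<\infty$. This is where \eqref{Paper5KpIp2} comes in: since $g\in L^p_0([0,1]^2)$, we have $J_p(g)\le 3K_p(g)$, and by \eqref{Paper5KpIp1}, $K_p(g)\le 4I_p(g)/p'=4I_p(f)/p'<\infty$. Thus both hypotheses \eqref{Paper5ConvergenceTeoeq0} hold for $g$, so Theorem \ref{Paper5ConvergenceTeo} yields that $g$ agrees a.e. with a continuous function; after replacing $g$ by that continuous representative (which still lies in $L^p_0([0,1]^2)$, since the defining integral conditions are preserved by a.e. modification), we obtain the desired representation $f(x,y)=g(x,y)+\phi_1(x)+\phi_2(y)$ a.e.

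There is essentially no obstacle here — the corollary is a direct packaging of the decomposition lemma, the inequalities \eqref{Paper5KpIp1} and \eqref{Paper5KpIp2}, and Theorem \ref{Paper5ConvergenceTeo}. The only minor point to check carefully is that the continuous representative of $g$ genuinely belongs to $L^p_0([0,1]^2)$, i.e. that its sectional integrals still vanish; this is immediate because changing $g$ on a null set does not change $\int_0^1 g(x,t)\,dt$ or $\int_0^1 g(t,y)\,dt$ for a.e. $x$, $y$, and these integrals are then continuous functions of $x$, $y$ vanishing a.e., hence identically zero.
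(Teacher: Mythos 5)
Your proposal is correct and follows exactly the paper's own argument: decompose $f$ via (\ref{Paper5Decomposition}), note $I_p(g)=I_p(f)$, deduce $J_p(g)<\infty$ from (\ref{Paper5KpIp2}) and (\ref{Paper5KpIp1}), and invoke Theorem \ref{Paper5ConvergenceTeo}. Your extra remark verifying that the continuous representative still lies in $L^p_0([0,1]^2)$ is a sound small addition that the paper leaves implicit.
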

\begin{proof}
By (\ref{Paper5Decomposition}), we have
$$
f(x,y)=\bar{f}(x,y)+\phi_1(x)+\phi_2(y)
$$
for a.e. $(x,y)\in\R^2$, where $\bar{f}\in L^p([0,1]^2)$. We shall prove that $\bar{f}$ is equal a.e. to a continuous function $g$. Clearly $I_p(\bar{f})=I_p(f)<\infty$, and since $\bar{f}\in L^p_0([0,1]^2)$, we also have $J_p(\bar{f})<\infty$, by (\ref{Paper5KpIp2}) and (\ref{Paper5KpIp1}). The result now follows from Theorem \ref{Paper5ConvergenceTeo}.
\end{proof}

%%%%%%%%%%%%%%%%%%%%%%%%%%%%%%%%%%%%%%%%%%%%%%%%%%%%%%%%%%%%%%%%%%%%%%%%%%%%%%%%%%%%%%%%%%%%%%%%%%%%%%%%%%%%%%%%%%%%%%%%%%%%%%%%%%%%%%%%%%%%%%%%%%%%%%%%%%%
\section{Estimates of the Vitali type $p$-variation}
In this section we shall consider the relationship between mixed integral smoothness and the Vitali type $p$-variation.

In the case $p=1$, we have the following theorem.
\begin{teo}
\label{Paper5HardyLittlewood}
Assume that $f\in L^1([0,1]^2)$ and that 
\begin{equation}
\nonumber
\o(f;u,v)_1=O(uv).
\end{equation}
Then there exist a function $g\in H_1^{(2)}$ and univariate functions $\phi_1,\phi_2$ such that for a.e. $(x,y)\in\R^2$,
$$
f(x,y)=g(x,y)+\phi_1(x)+\phi_2(y).
$$
Moreover,
\begin{equation}
\label{Paper5HardyLittlewoodeq0}
v_1^{(2)}(g)=\sup_{u,v>0}\frac{\o(f;u,v)_1}{uv}.
\end{equation}
\end{teo}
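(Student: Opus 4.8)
The plan is to first handle the decomposition, then establish the variational estimate via a constructive approximation argument using Lemma~\ref{Paper5PartialDerviativeLemma} and the Helly-type principle of Lemma~\ref{Paper5HellyLemma}. For the decomposition: write $f = \bar f + \phi_1 + \phi_2$ as in (\ref{Paper5Decomposition}), with $\bar f \in L^1_0([0,1]^2)$. Since $\o(f;u,v)_1$ depends only on the mixed difference $\Delta(s,t)f$, and $\Delta(s,t)(\phi_1(x)+\phi_2(y)) = 0$, we have $\o(\bar f;u,v)_1 = \o(f;u,v)_1 = O(uv)$. So it suffices to prove the theorem for $\bar f$, i.e.\ to show $\bar f$ agrees a.e.\ with some $g \in H_1^{(2)}$ satisfying (\ref{Paper5HardyLittlewoodeq0}); the easy inequality $\sup_{u,v}\o(f;u,v)_1/(uv) \le v_1^{(2)}(g)$ is already contained in Golubov's bound (\ref{Paper5HardyLittIntr}), so only the reverse direction requires work.

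The core construction: for $\bar f \in L^1_0([0,1]^2)$, form its iterated primitive $F(x,y) = \int_0^x\int_0^y \bar f(s,t)\,ds\,dt$; since $\bar f \in L^1_0$, $F$ is $1$-periodic in both variables. Now approximate $\bar f$ by Steklov-type averages: set, for small $u,v>0$,
\[
g_{u,v}(x,y) = \frac{1}{uv}\,\Delta(u,v)F(x,y) = \frac{1}{uv}\int_x^{x+u}\int_y^{y+v}\bar f(s,t)\,ds\,dt.
\]
Each $g_{u,v}$ is (absolutely) continuous and lies in $H_1^{(2)}$; I would compute its mixed variation by observing that the mixed second difference of $g_{u,v}$ over a net is an average of mixed second differences of $\bar f/(uv)$-type quantities, and more directly, apply Lemma~\ref{Paper5PartialDerviativeLemma} together with the fact that $g_{u,v}$ is itself an iterated integral of $\frac{1}{uv}\Delta(u,v)\bar f(s,t)$ (up to the periodicity correction). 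The key estimate will be
\[
v_1^{(2)}(g_{u,v}) \le \frac{1}{uv}\,\|\Delta(u,v)\bar f\|_1 \le \frac{\o(\bar f;u,v)_1}{uv} \le M := \sup_{u,v>0}\frac{\o(f;u,v)_1}{uv},
\]
uniformly in $u,v$. One must similarly bound $v_1(g_{u,v}(\cdot,y_0))$, $v_1(g_{u,v}(x_0,\cdot))$ and $|g_{u,v}(x_0,y_0)|$ uniformly — these follow from $\|\bar f\|_1 \le \o(\bar f;1,1)_1 \le M$ and the one-dimensional analogue, after integrating in the frozen variable.

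Next, pick a sequence $u_n, v_n \to 0$ and apply Lemma~\ref{Paper5HellyLemma} to extract a subsequence $g_{u_{n_j},v_{n_j}}$ converging pointwise to some $g \in H_1^{(2)}$ with $v_1^{(2)}(g) \le \liminf v_1^{(2)}(g_{u_{n_j},v_{n_j}}) \le M$ (lower semicontinuity of total variation under pointwise limits, which is immediate since each net-sum is a finite sum of pointwise-convergent terms). Finally I must identify $g = \bar f$ a.e.: since $\bar f \in L^1$, the Steklov averages $g_{u,v} \to \bar f$ in $L^1$ as $u,v\to 0$ (Lebesgue differentiation / continuity of translation in $L^1$), hence a further subsequence converges a.e.\ to $\bar f$; combined with pointwise convergence to $g$, this gives $g = \bar f$ a.e. Then $g + \phi_1 + \phi_2 = f$ a.e., and $v_1^{(2)}(g) \le M$ while the reverse inequality is (\ref{Paper5HardyLittIntr}) applied to $g$ (noting $\o(g;u,v)_1 = \o(\bar f;u,v)_1 = \o(f;u,v)_1$), yielding (\ref{Paper5HardyLittlewoodeq0}).

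The main obstacle I anticipate is the bookkeeping around periodicity when differentiating/integrating $F$ — ensuring $g_{u,v}$ is genuinely $1$-periodic and that Lemma~\ref{Paper5PartialDerviativeLemma} applies cleanly to it (the lemma needs the integrand to be in $L^1_0$, and $\frac{1}{uv}\Delta(u,v)\bar f$ inherits the zero-mean property from $\bar f \in L^1_0$, so this should go through). A secondary technical point is verifying the uniform bounds on the one-dimensional sections and the value at $(x_0,y_0)$ so that Lemma~\ref{Paper5HellyLemma} is applicable; this reduces to the one-variable Hardy–Littlewood-type facts and elementary estimates, but must be stated carefully.
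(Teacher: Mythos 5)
Your overall strategy --- Steklov averaging, computing $v_1^{(2)}$ of the averages via Lemma~\ref{Paper5PartialDerviativeLemma}, Helly selection via Lemma~\ref{Paper5HellyLemma}, identification of the limit by Lebesgue differentiation, and Golubov's inequality (\ref{Paper5HardyLittIntr}) for the reverse inequality --- is exactly the paper's. The one step that does not go through as you state it is the verification of the hypotheses of Lemma~\ref{Paper5HellyLemma}, namely the uniform bounds on $v_1(g_{u,v}(\cdot,y_0))$ and $v_1(g_{u,v}(x_0,\cdot))$. Writing $g_{u,v}(x,y_0)=\frac1u\int_0^u h_v(x+s)\,ds$ with $h_v(x)=\frac1v\int_0^v\bar f(x,y_0+t)\,dt$, one gets
\begin{equation}
\nonumber
v_1(g_{u,v}(\cdot,y_0))=\frac1u\|\Delta(u)h_v\|_1\le\frac{1}{uv}\int_{y_0}^{y_0+v}\int_0^1|\Delta_1(u)\bar f(x,y)|\,dx\,dy.
\end{equation}
The zero-mean property of $\bar f$ does give $\int_0^1\int_0^1|\Delta_1(u)\bar f|\,dx\,dy\le\o(\bar f;u,1)_1\le Mu$ (this is the ``integration in the frozen variable'', carried out over the \emph{full period}), but in the display above the frozen variable $y$ is only averaged over the window $[y_0,y_0+v]$, and a window average of length $v$ is not controlled by the integral over the whole period: the mass of $y\mapsto\int_0^1|\Delta_1(u)\bar f(x,y)|\,dx$ may concentrate near $y_0$, so the right-hand side can be of order $M/v$ rather than $M$. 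The hypothesis $\o(f;u,v)_1=O(uv)$ gives no pointwise-in-$y$ control of partial differences, and choosing $y_0$ a Lebesgue point does not repair this, since the exceptional set depends on $u$ and you need a single $y_0$ working along the whole sequence.

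The missing idea is the paper's: instead of feeding $g_{u,v}$ itself into the Helly lemma, replace it by $\tilde g_{u,v}(x,y)=g_{u,v}(x,y)-g_{u,v}(x,y_0)-g_{u,v}(x_0,y)+g_{u,v}(x_0,y_0)$, where $(x_0,y_0)$ is a Lebesgue point of $f$ whose coordinate sections consist a.e.\ of Lebesgue points. Then $\tilde g_{u,v}(\cdot,y_0)$ and $\tilde g_{u,v}(x_0,\cdot)$ vanish identically, so the section variations and the value at $(x_0,y_0)$ are zero and the Helly hypotheses reduce to your (correct) bound $v_1^{(2)}(\tilde g_{u,v})=v_1^{(2)}(g_{u,v})\le M$. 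The subtracted univariate pieces converge along the chosen sections by Lebesgue differentiation and are absorbed into $\phi_1,\phi_2$; with this choice the preliminary reduction to $L^1_0$ is not even needed, since one may simply take $\phi_1(x)=f(x,y_0)$ and $\phi_2(y)=f(x_0,y)-f(x_0,y_0)$. The rest of your argument --- the identity $v_1^{(2)}(g_{u,v})=\frac{1}{uv}\|\Delta(u,v)\bar f\|_1$ via Lemma~\ref{Paper5PartialDerviativeLemma}, lower semicontinuity of the variation under pointwise convergence, and the converse inequality via (\ref{Paper5HardyLittIntr}) --- is correct and coincides with the paper's.
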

\begin{proof}
We may without loss of generality assume that $f\in L^1_0([0,1]^2)$.
For $n\in\N$, denote
$$
f_n(x,y)=n^2\int_0^{1/n}\int_0^{1/n}f(x+s,y+t)dsdt.
$$
We shall first prove that
\begin{equation}
\label{Paper5HardyLittlewoodeq1}
v_1^{(2)}(f_n)\le\sup_{u,v>0}\frac{\o(f;u,v)_1}{uv}.
\end{equation}
Observe that
\begin{eqnarray}
\nonumber
f_n(x,y)&=&\int_0^x\int_0^yD_1D_2f_n(s,t)dsdt-f_n(x,0)-f_n(0,y)+f_n(0,0)\\
\nonumber
&=&F_n(x,y)-f_n(x,0)-f_n(0,y)+f_n(0,0).
\end{eqnarray}
Moreover, $D_1D_2f_n(s,t)=n^2\Delta(1/n,1/n)f(s,t)$. Thus, by (\ref{Paper5PartialDerivative}),
\begin{eqnarray}
\nonumber
v_1^{(2)}(f_n)&=&v_1^{(2)}(F_n)=n^2\int_0^1\int_0^1|\Delta(1/n,1/n)f(x,y)|dxdy\\
\nonumber
&\le&\sup_{u,v>0}\frac{\o(f;u,v)_1}{uv}.
\end{eqnarray}
This proves (\ref{Paper5HardyLittlewoodeq1}).

Let $E$ be the set of Lebesgue points of $f$. Since $\R^2\setminus E$ has Lebesgue measure 0, there exist $(x_0,y_0)\in E$ such that the sections
$$
E(x_0)=\{y\in\R:(x_0,y)\in E\}\quad{\rm and}\quad E(y_0)=\{x\in\R:(x,y_0)\in E\},
$$
have full measure. That is,
\begin{equation}
\label{Paper5Case1eq1}
\mes_1(\R\setminus E(x_0))=\mes_1(\R\setminus E(y_0))=0,
\end{equation}
where $\mes_1$ denotes linear Lebesgue measure. For $n\in\N$, define now
$$
g_n(x,y)=f_n(x,y)-f_n(x,y_0)-f_n(x_0,y)+f_n(x_0,y_0).
$$
For each $n\in\N$, we have $g_n(x,y_0)=g_n(x_0,y)=0$ for all $x,y\in\R$. Thus, by (\ref{Paper5HardyLittlewoodeq1}),
\begin{eqnarray}
\nonumber
v_1^{(2)}(g_n)+v_1(g_n(\cdot,y_0))+v_1(g_n(x_0,\cdot))+|g_n(x_0,y_0)|=\\
\label{Paper5VarEstim1}
=v_1^{(2)}(g_n)=v_1^{(2)}(f_n)\le\sup_{u,v>0}\frac{\o(f;u,v)_1}{uv}.
\end{eqnarray}
By Lemma \ref{Paper5HellyLemma}, there is a subsequence $g_{n_j}$ that converges at all points to a function $g\in H_1^{(2)}$. On the other hand, by (\ref{Paper5Case1eq1}) and Lebesgue's differentiation theorem, for a.e. $(x,y)\in\R^2$ there holds
$$
g(x,y)=f(x,y)-f(x,y_0)-f(x_0,y)+f(x_0,y_0).
$$
Take $\phi_1(x)=f(x,y_0)$ and $\phi_2(y)=f(x_0,y)-f(x_0,y_0)$, then $f(x,y)=g(x,y)+\phi_1(x)+\phi_2(y)$ for a.e. $(x,y)\in\R^2$. 

We now prove (\ref{Paper5HardyLittlewoodeq0}). Since $g_{n_j}$ converges to $g$ at all points, it follows from (\ref{Paper5VarEstim1}) that
$$
v_1^{(2)}(g)\le v_1^{(2)}(g_{n_j})\le\sup_{u,v>0}\frac{\o(f;u,v)_1}{uv}.
$$
On the other hand, since $f=g$ a.e., we have for any $u,v\in[0,1]$
$$
\o(f;u,v)_1=\o(g;u,v)_1\le v_1^{(2)}(g)uv,
$$
by (\ref{Paper5HardyLittIntr}).
Whence, $\sup\o(f;u,v)_1/uv\le v_1^{(2)}(g)$. This proves (\ref{Paper5HardyLittlewoodeq0}).
\end{proof}

Recall the notations (\ref{Paper5Integral2}) and (\ref{Paper5Integral3}).
\begin{teo}
\label{Paper5MainTeo1}
Let $f\in L^p([0,1]^2)~~(1<p<\infty)$ and assume that $I_p(f)<\infty$. Then there exists a continuous function $g\in H_p^{(2)}$ and univariate functions $\phi_1,\phi_2$ such that for a.e. $(x,y)\in\R^2$, we have
\begin{equation}
\label{Paper5MainTeo1eq00}
f(x,y)=g(x,y)+\phi_1(x)+\phi_2(y).
\end{equation}
Moreover,
\begin{equation}
\label{Paper5mainTeo1eq0}
v_p^{(2)}(g)\le A\left[\o(f;1,1)_p+\frac{1}{pp'}K_p(f)+\left(\frac{1}{pp'}\right)^2I_p(f)\right],
\end{equation}
where $A$ is an absolute constant. If $f\in L^p_0([0,1]^2)$, then we may take $\phi_1=\phi_2=0$ in (\ref{Paper5MainTeo1eq00}).
\end{teo}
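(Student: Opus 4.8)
The plan is to mirror the strategy of Theorem \ref{Paper5ConvergenceTeo}, but now track the $V_p$-variation rather than the $L^\infty$-norm, and to produce the limiting function $g$ via a Helly-type compactness argument as in Theorem \ref{Paper5HardyLittlewood}. As in Corollary \ref{Paper5Cor1}, by subtracting $\phi_1(x)=\int_0^1 f(x,t)\,dt$ and $\phi_2(y)=\int_0^1 f(t,y)\,dt-\iint f$, we may assume $f\in L^p_0([0,1]^2)$, so that $J_p(f)<\infty$ by (\ref{Paper5KpIp2}) and (\ref{Paper5KpIp1}). By Theorem \ref{Paper5ConvergenceTeo}, $f$ is already equal a.e.\ to a continuous function, which we still call $f$; the issue is only to show $f\in H_p^{(2)}$ with the claimed bound on $v_p^{(2)}(f)$.

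\medskip
\noindent\textbf{Step 1: reduce to a pointwise estimate on $\varphi_p[f]$-type quantities.}
The key one-dimensional input is (\ref{Paper5KLiTeo1eq1}): for the $x$-section $f_x$,
$$
v_p(f_x)\le A\Bigl[\o(f_x;1)_p+\frac{1}{pp'}\int_0^1 v^{-1/p-1}\|\Delta(v)f_x\|_p\,dv\Bigr],
$$
using (\ref{Paper5ViktorLemmaeq2}) as in (\ref{Paper5ConvergenceTeoeq2}). But what I really need is a Vitali-type analogue. The natural device is to fix a net $\mathcal N$ in the $x$-direction, $x_0<\dots<x_m=x_0+1$, and apply (\ref{Paper5KLiTeo1eq1}) \emph{in the $y$-variable} to the univariate function $y\mapsto \Delta_1(x_{i+1}-x_i)f(x_i,y)$, whose first differences in $y$ are precisely the mixed differences $\Delta f(x_i,y_j)$. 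Summing the $p$-th powers over $i$ and using Minkowski's inequality for sums, one obtains
$$
v_p^{(2)}(f)\le A\Bigl[\sup_{\mathcal N_x}\Bigl(\sum_i \o(\Delta_1(\Delta x_i)f_{x_i};1)_p^p\Bigr)^{1/p}+\frac{1}{pp'}\sup_{\mathcal N_x}\Bigl(\sum_i \Bigl(\int_0^1 v^{-1/p-1}\|\Delta(\Delta x_i,v)f(x_i,\cdot)\|_p\,dv\Bigr)^p\Bigr)^{1/p}\Bigr],
$$
where $\Delta x_i=x_{i+1}-x_i$.

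\medskip
\noindent\textbf{Step 2: estimate the two terms.}
For the first term, $\o(\Delta_1(\Delta x_i)f_{x_i};1)_p\le 2\Omega_p(\Delta_1(\Delta x_i)f_{x_i})$ by (\ref{Paper5Omegap2}), and a discrete-variation argument (exactly the one-dimensional estimate of $v_p$ of the function $x\mapsto f(x,y)$ applied to the modulus) combined with (\ref{Paper5KLiTeo1eq1}) in the $x$-variable should bound the first supremum by a constant times $\o(f;1,1)_p+\frac{1}{pp'}\int_0^1 u^{-1/p-1}\o(f;u,1)_p\,du$, which is $\le A[\o(f;1,1)_p+\frac1{pp'}K_p(f)]$. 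For the second term, I move the $\int_0^1 v^{-1/p-1}(\cdot)\,dv$ outside using the integral Minkowski inequality, reducing to $\int_0^1 v^{-1/p-1}\bigl(\sup_{\mathcal N_x}\sum_i\|\Delta(\Delta x_i,v)f(x_i,\cdot)\|_p^p\bigr)^{1/p}\,dv$; for fixed $v$, the inner supremum is the $V_p$-variation in $x$ of $x\mapsto \Delta_2(v)f(x,\cdot)$ measured in $L^p_y$, and applying (\ref{Paper5KLiTeo1eq1}) once more in the $x$-variable to this $L^p$-valued function (the vector-valued version of \cite{KLi} goes through verbatim, or one estimates scalarly and uses Minkowski) bounds it by $A[\o(f;1,v)_p+\frac1{pp'}\int_0^1 u^{-1/p-1}\o(f;u,v)_p\,du]$. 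Integrating $v^{-1/p-1}$ against this yields $A[K_p(f)+\frac1{pp'}I_p(f)]$, so the second term contributes $\frac{A}{pp'}[K_p(f)+\frac1{pp'}I_p(f)]$. Collecting: $v_p^{(2)}(f)\le A[\o(f;1,1)_p+\frac1{pp'}K_p(f)+(\frac1{pp'})^2 I_p(f)]$, which is (\ref{Paper5mainTeo1eq0}). The sections $f_x,f_y\in V_p$ follow from (\ref{Paper5KLiTeo1eq1}) applied directly to each section together with Lemma \ref{Paper5ViktorLemma}, finiteness of the relevant one-dimensional integrals being a consequence of $K_p(f)<\infty$ and $J_p(f)<\infty$; hence $f\in H_p^{(2)}$.

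\medskip
\noindent\textbf{Main obstacle.}
The delicate point is Step 1–2: justifying that one may apply the one-dimensional theorem (\ref{Paper5KLiTeo1eq1}) to $L^p$-valued functions of a single variable (equivalently, interchanging the supremum over nets with integrals and controlling everything by a \emph{single} absolute constant $A$, uniformly in $p$). One must check that the proof from \cite{KLi} is linear/sublinear enough to survive vectorisation, or else run the whole two-dimensional iteration scalarly, keeping careful track that the constant does not degrade. A secondary technical annoyance is that, unlike Theorem \ref{Paper5HardyLittlewood}, here $f$ is already continuous (by Theorem \ref{Paper5ConvergenceTeo}), so strictly speaking one does not need the mollification/Helly step to \emph{produce} $g$; but one may still wish to run a mollification argument to make the variation estimates rigorous for a genuinely continuous representative and then pass to the limit, invoking lower semicontinuity of $v_p^{(2)}$ under pointwise convergence. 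I expect the bookkeeping of constants through the double iteration to be the part requiring the most care.
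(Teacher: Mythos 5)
Your proposal follows essentially the same route as the paper's proof: reduce to $f\in L^p_0([0,1]^2)$ via Corollary \ref{Paper5Cor1}, fix a net, bound the Vitali sum by $\bigl(\sum_i v_p(g_i)^p\bigr)^{1/p}$ with $g_i(y)=g(x_{i+1},y)-g(x_i,y)$, apply the one-dimensional estimate (\ref{Paper5KLiTeo1eq1}) in $y$ to each $g_i$, and then apply it again in $x$ to the difference sections $g_{y,v}(x)=g(x,y+v)-g(x,y)$. The "vector-valued" obstacle you flag is exactly dispatched by the scalar alternative you mention: the paper bounds the discrete sum $S_v(y)\le v_p(g_{y,v})^p$ pointwise in $y$, applies the scalar inequality, and only then integrates in $y$ and uses Minkowski's integral inequality, so no vectorised version of \cite{KLi} is needed and no mollification/Helly step is required.
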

\begin{proof}
By Corollary \ref{Paper5Cor1}, there is a continuous function $g\in L_0^p([0,1]^2)$ such that
$$
f(x,y)=g(x,y)+\phi_1(x)+\phi_2(y)
$$
for a.e. $(x,y)\in\R^2$ (if $f\in L^p_0([0,1]^2)$, then $\phi_1=\phi_2=0$). We shall prove that $g\in H_p^{(2)}$.

Take any net
$$
\mathcal{N}=\{(x_i,y_j):0\le i\le m, 0\le j\le n\},
$$
and set
$$
g_i(y)=g(x_{i+1},y)-g(x_i,y),\quad 0\le i\le m-1.
$$
Clearly,
\begin{eqnarray}
\nonumber
v_p^{(2)}(g;\mathcal{N})&=&\left(\sum_{i=0}^{m-1}\sum_{j=0}^{n-1}|\Delta g(x_i,y_j)|^p\right)^{1/p}\\
\nonumber
&=&\left(\sum_{i=0}^{m-1}\sum_{j=0}^{n-1}|g_i(y_{j+1})-g_i(y_j)|^p\right)^{1/p}\\
\label{Paper5mainTeo1eq00}
&\le&\left(\sum_{i=0}^{m-1}v_p(g_i)^p\right)^{1/p}.
\end{eqnarray}
By (\ref{Paper5KLiTeo1eq1}) and (\ref{Paper5ViktorLemmaeq2}), we have for $0\le i\le m-1$
\begin{equation}
\label{Paper5Omega1}
v_p(g_i)\le A\left(\o(g_i;1)_p+\frac{1}{pp'}\int_0^1v^{-1/p-1}\|\Delta(v) g_i\|_pdv\right),
\end{equation}
where $\Delta(v)g_i(y)=g_i(y+v)-g_i(y)$. Set
$$
I_i=\int_0^1v^{-1/p-1}\|\Delta(v)g_i\|_pdv.
$$
By (\ref{Paper5Omega1}) and (\ref{Paper5Omegap1}), we have
\begin{equation}
\label{Paper5mainTeo1eq1}
\left(\sum_{i=0}^{m-1}v_p(g_i)^p\right)^{1/p}\le A\left[\left(\sum_{i=0}^{m-1}\Omega_p(g_i)^p\right)^{1/p}+
\frac{1}{pp'}\left(\sum_{i=0}^{m-1}I_i^p\right)^{1/p}\right].
\end{equation}
Denote $g_{y,v}(x)=g(x,y+v)-g(x,y)$. Since
\begin{equation}
\nonumber
\Omega_p(g_i)^p=\int_0^1\int_0^1|g_i(y+v)-g_i(y)|^pdydv,
\end{equation}
we have
\begin{eqnarray}
\nonumber
\sum_{i=0}^{m-1}\Omega_p(g_i)^p&=&\int_0^1\int_0^1\sum_{i=0}^{m-1}|g_i(y+v)-g_i(y)|^pdydv\\
\nonumber
&=&\int_0^1\int_0^1\sum_{i=0}^{m-1}|g_{y,v}(x_{i+1})-g_{y,v}(x_i)|^pdydv\\
\nonumber
&\le&\int_0^1\int_0^1v_p(g_{y,v})^pdydv.
\end{eqnarray}
Further, by (\ref{Paper5KLiTeo1eq1}), (\ref{Paper5ViktorLemmaeq2}) and (\ref{Paper5Omegap2}), we have
$$
v_p(g_{y,v})^p\le A\left[\Omega_p(g_{y,v})^p+\left(\frac{1}{pp'}\int_0^1t^{-1/p-1}\|\Delta(t)g_{y,v}\|_pdt\right)^p\right].
$$
Thus,
\begin{eqnarray}
\nonumber
\left(\sum_{i=0}^{m-1}\Omega_p(g_i)^p\right)^{1/p}\le A\left[\left(\int_0^1\int_0^1\Omega_p(g_{y,v})^pdydv\right)^{1/p}+\right.\\
\nonumber
+\left.\frac{1}{pp'}\left(\int_0^1\int_0^1\left(\int_0^1t^{-1/p-1}\|\Delta(t)g_{y,v}\|_pdt\right)^pdydv\right)^{1/p}\right].
\end{eqnarray}
Observe that
\begin{equation}
\label{Paper5OmegaP1}
\Omega_p(g_{y,v})^p=\int_0^1\int_0^1|\Delta(h,v)g(x,y)|^pdxdh,
\end{equation}
thus
$$
\left(\int_0^1\int_0^1\Omega_p(g_{y,v})^pdydv\right)^{1/p}\le\o(g;1,1)_p.
$$
Next, by Minkowski's inequality,
\begin{eqnarray}
\nonumber
\left(\int_0^1\int_0^1\left[\int_0^1t^{-1/p-1}\left(\int_0^1|g_{y,v}(x+t)-g_{y,v}(x)|^pdx\right)^{1/p}dt\right]^pdydv\right)^{1/p}\\
\nonumber
\le\int_0^1t^{-1/p-1}\left(\int_0^1\int_0^1\int_0^1|g_{y,v}(x+t)-g_{y,v}(x)|^pdxdydv\right)^{1/p}dt\\
\nonumber
\le\int_0^1t^{-1/p-1}\left(\int_0^1\o(g;t,v)_p^pdv\right)^{1/p}dt \le\int_0^1t^{-1/p-1}\o(g;t,1)_pdt
\end{eqnarray}
Thus, we have
\begin{equation}
\label{Paper5mainTeo1eq6}
\left(\sum_{i=0}^{m-1}\Omega_p(g_i)^p\right)^{1/p}\le A\left[\o(g;1,1)_p+\frac{1}{pp'}K_p(g)\right].
\end{equation}

Now we estimate the second term of (\ref{Paper5mainTeo1eq1}). Applying Minkowski's inequality, we obtain
\begin{equation}
\label{Paper5mainTeo1eq2}
\left(\sum_{i=0}^{m-1}I_i^p\right)^{1/p}\le\int_0^1v^{-1/p-1}\left(\sum_{i=0}^{m-1}\|\Delta(v)g_i\|_p^p\right)^{1/p}dv.
\end{equation}
Furthermore,
\begin{eqnarray}
\nonumber
\sum_{i=0}^{m-1}\|\Delta(v)g_i\|_p^p&=&\int_0^1\left(\sum_{i=0}^{m-1}|g_i(y+v)-g_i(y)|^p\right)dy\\
\label{Paper5mainTeo1eq4}
&\equiv&\int_0^1S_v(y)dy.
\end{eqnarray}
On the other hand,
\begin{eqnarray}
\nonumber
S_v(y)&=&\sum_{i=0}^{m-1}|g(x_{i+1},y+v)-g(x_i,y+v)-g(x_{i+1},y)+g(x_i,y)|^p\\
\nonumber
&=&\sum_{i=0}^{m-1}|g_{y,v}(x_{i+1})-g_{y,v}(x_i)|^p,
\end{eqnarray}
where $g_{y,v}(x)=g(x,y+v)-g(x,y)$.
Thus, by (\ref{Paper5KLiTeo1eq1}) and (\ref{Paper5ViktorLemmaeq2}), for a fixed $y\in[0,1]$, we have the following estimate
\begin{eqnarray}
\nonumber
S_v(y)\le A\left(\Omega_p(g_{y,v})+\frac{1}{pp'}\int_0^1u^{-1/p-1}\|\Delta(u)g_{y,v}\|_pdu\right)^p\\
\label{Paper5mainTeo1eq9}
\le2^pA\left[\Omega_p(g_{y,v})^p+\left(\frac{1}{pp'}\int_0^1u^{-1/p-1}\|\Delta(u)g_{y,v}\|_pdu\right)^p\right].
\end{eqnarray}
Further, by (\ref{Paper5OmegaP1}),
$$
\int_0^1\Omega_p(g_{y,v})^pdy\le\o(g;1,v)_p^p.
$$
This inequality, (\ref{Paper5mainTeo1eq9}) and Minkowski's inequality yield
\begin{eqnarray}
\nonumber
\lefteqn{\left(\int_0^1S_v(y)dy\right)^{1/p}\le A'\Biggl[\o(g;1,v)_p+\Biggr.}\\
\label{Paper5mainTeo1eq3}
&&\left.+\frac{1}{pp'}\int_0^1u^{-1/p-1}\left(\int_0^1\|\Delta(u)g_{y,v}\|_p^pdy\right)^{1/p}du\right].
\end{eqnarray}
Since
$$
\|\Delta(u)g_{y,v}\|_p^p=\int_0^1|\Delta(u,v)g(x,y)|^pdx,$$
we obtain from (\ref{Paper5mainTeo1eq4}) and (\ref{Paper5mainTeo1eq3})
\begin{eqnarray}
\nonumber
\lefteqn{\left(\sum_{i=0}^{m-1}\|\Delta(v) g_i\|_p^p\right)^{1/p}\le} \\
\nonumber
&&\le A'\left[\o(g;1,v)_p+\frac{1}{pp'}\int_0^1u^{-1/p-1}\o(g;u,v)_pdu\right].
\end{eqnarray}
Integrating this inequality with respect to $v$ and taking into account 
(\ref{Paper5mainTeo1eq2}), we have
\begin{equation}
\nonumber
\left(\sum_{i=0}^{m-1}I_i^p\right)^{1/p}\le A'\left[K_p(g)+\frac{1}{pp'}I_p(g)\right].
\end{equation}
The above inequality together with (\ref{Paper5mainTeo1eq1}) and (\ref{Paper5mainTeo1eq6}) yield
\begin{eqnarray}
\nonumber
\lefteqn{\left(\sum_{i=0}^{m-1}v_p(g_i)^p\right)^{1/p}\le}\\
\label{Paper5mainEstim}
&&\le A'\left[\o(g;1,1)_p+\frac{1}{pp'}K_p(g)+\left(\frac{1}{pp'}\right)^2I_p(g)\right].
\end{eqnarray}
The estimate (\ref{Paper5mainTeo1eq0}) follows now from (\ref{Paper5mainTeo1eq00}), (\ref{Paper5mainEstim}), and the fact that $\o(g;u,v)_p=\o(f;u,v)_p$.

To show that $g\in H_p^{(2)}$, we also need to demonstrate that there exist $x,y\in\R$ such that $g_x,g_y\in V_p$. By applying (\ref{Paper5KLiTeo1eq1}) and (\ref{Paper5ViktorLemmaeq2}) to an arbitrary $x$-section $g_x$, we get
$$
v_p(g_x)\le\left[\|g_x\|_p+\frac{1}{pp'}\int_0^1v^{-1/p-1}\|\Delta(v)g_x\|_pdv\right]=A\Phi(x).
$$
It was shown in the proof of Theorem \ref{Paper5ConvergenceTeo} that if $J_p(g)$ and $I_p(g)$ are finite, then $\Phi\in L^\infty([0,1])$. Now, since $g\in L^p_0([0,1]^2)$, we have $J_p(g)\le 12I_p(g)/p'$, by (\ref{Paper5KpIp2}) and (\ref{Paper5KpIp1}). Thus, for a.e. $x\in\R$,
$$
v_p(g_x)\le A\|\Phi\|_\infty<\infty.
$$
In the same way, we have $g_y\in V_p$ for a.e. $y\in\R$. This concludes the proof.
\end{proof}

Below we shall demonstrate that the estimate (\ref{Paper5mainTeo1eq0}) is sharp in a sense.
For this, we use the following results. Let
$$
t_n(x)=\sin 2\pi nx,
$$
for $n\in\N$. It is easy to show that we have
\begin{equation}
\label{Paper5Sharpness1}
n^{1/p}\le v_p(t_n)\le2\pi n^{1/p}
\end{equation}
and
\begin{equation}
\label{Paper5Sharpness2}
\o(t_n;\d)_p\le2\pi\min(1,n\d).
\end{equation}

\begin{rem} 
Let $1<p\le2$, by (\ref{Paper5KpIp1}) and (\ref{Paper5KpIp3}), we have
$$
\o(f;1,1)_p+\frac{1}{p'}K_p(f)\le\frac{8}{(p')^2}I_p(f)
$$
Whence, for $1<p\le2$, the estimate (\ref{Paper5mainTeo1eq0}) assumes the form
\begin{equation}
\label{Paper5EstimSmallp}
v_p^{(2)}(f)\le\frac{A}{(p')^2}I_p(f).
\end{equation}
The constant $1/(p')^2$ has the optimal order as $p\rightarrow1$. Indeed, let $f(x,y)=t_1(x)t_1(y)$, then $f\in H_p^{(2)}$ for all $p\ge1$.
By (\ref{Paper5Sharpness1}) and (\ref{Paper5Sharpness3}), we have $v_p^{(2)}(f)\ge1$ for all $p\ge1$. On the other hand, by (\ref{Paper5Sharpness2}) and (\ref{Paper5Sharpness4}), we easily get that $I_p(f)\le 4\pi^2(p')^2$ for $p>1$. 
This shows that the constant coefficient $1/(p')^2$ at the right-hand side of (\ref{Paper5EstimSmallp}) cannot be replaced with some $c_p$ such that $\varliminf_{p\rightarrow1}(p')^2c_p=0$.
\end{rem}

\begin{rem}
Let $p>2$, then $1<p'<2$ and the estimate (\ref{Paper5mainTeo1eq0}) takes the form.
\begin{equation}
\label{Paper5EstimBigp}
v_p^{(2)}(f)\le A\left[\o(f;1,1)_p+\frac{1}{p}K_p(f)+\frac{1}{p^2}I_p(f)\right].
\end{equation}
We shall prove that the first term at the right-hand side of (\ref{Paper5EstimBigp}) cannot be omitted, and that the constant coefficients of the other two terms have the optimal asymptotic behaviour as $p\rightarrow\infty$.

Take first $f(x,y)=t_1(x)t_1(y)$. As above, $v_p^{(2)}(f)\ge1$ for all $p>1$ and thus $\varliminf_{p\rightarrow\infty}v_p^{(2)}(f)\ge1$. On the other hand, by (\ref{Paper5Sharpness2}) and (\ref{Paper5Sharpness4}), we have for all $p>2$ the inequalities
$$
\frac{1}{p}K_p(f)\le\frac{16\pi^2}{p}\quad{\rm and}\quad\frac{1}{p^2}I_p(f)\le\frac{16\pi^2}{p^2},
$$
This shows that the term $\o(f;1,1)_p$ of (\ref{Paper5EstimBigp}) cannot be omitted.

We proceed to show the sharpness of the constant coefficients. For fixed but arbitrary $1<p<\infty$, let $\a_p,\b_p$ be any coefficients such that
\begin{equation}
\label{Paper5BigpEstimGen}
v_p^{(2)}(f)\le A\left[\o(f;1,1)_p+\a_pK_p(f)+\b_pI_p(f)\right],
\end{equation}
holds for some absolute constant $A$ and all (continuous) functions $f\in L^p([0,1]^2)$ with $I_p(f)<\infty$. In light of Theorem \ref{Paper5MainTeo1}, we may assume that $\a_p\le1/p$ and $\b_p\le1/p^2$. We shall prove that these decay rates are optimal, i.e., that $\varliminf_{p\rightarrow\infty}p\a_p>0$ and $\varliminf_{p\rightarrow\infty}p^2\b_p>0$.

Let $f(x,y)=t_n(x)t_1(y)$, where $n\in\N$ is fixed but arbitrary. By (\ref{Paper5Sharpness2}) and (\ref{Paper5Sharpness4}), we have
$$
\o(f;u,v)_p\le 4\pi^2v\min(nu,1).
$$
Simple calculations shows that there exists an absolute constant $A>0$ such that $K_p(f)\le Apn^{1/p}$ and $I_p(f)\le Apn^{1/p}$.
On the other hand, by (\ref{Paper5Sharpness1}) and (\ref{Paper5Sharpness3}), we have $v_p^{(2)}(f)\ge n^{1/p}$.
Putting these estimates into (\ref{Paper5BigpEstimGen}) and taking into consideration that $\b_p\le1/p^2$ yield that for all $p>2$ and all $n\in\N$, we have 
$$
n^{1/p}\le A\left[1+\left(p\a_p+\frac{1}{p}\right)n^{1/p}\right],
$$
where $A$ is an absolute constant. Assume that $\varliminf_{p\rightarrow\infty}p\a_p=0$. Then, given any $\e>0$, we may choose $r=r(\e)$ such that for all $n\in\N$, there holds
$$
n^{1/r}\le A(1+\e n^{1/r}).
$$
In particular, take $\e=1/(2A)$ and choose subsequently $n\in\N$ large enough to have $n^{1/r}>2A$. This gives the contradiction
$$
n^{1/r}\le A\left(1+\frac{n^{1/r}}{2A}\right)<n^{1/r}.
$$
Whence, $\varliminf_{p\rightarrow\infty}p\a_p>0$. To show that $\varliminf_{p\rightarrow\infty}p^2\b_p>0$, take $f(x,y)=t_n(x)t_n(y)$, where $n\in\N$ is fixed but arbitrary. As above, we have
$$
\o(f;u,v)_p\le 4\pi^2\min(nv,1)\min(nu,1).
$$
Then there exists an absolute constant $A>0$ such that
$K_p(f)\le Apn^{1/p}$ and $I_p(f)\le Ap^2n^{2/p}$.
On the other hand, $v_p^{(2)}(f)\ge n^{2/p}$. Putting these estimates into (\ref{Paper5BigpEstimGen}) yields that for all $n\in\N$ and $p>2$,
$$
n^{2/p}\le A[1+p\a_pn^{1/p}+p^2\b_pn^{2/p}]
$$
where $A>0$ is an absolute constant. Dividing by $n^{1/p}$ and taking into consideration that $p\a_p\le1$, we see that 
$$
n^{1/p}\le A[2+p^2\b_pn^{1/p}],
$$
for all $p>2$ and all $n\in\N$. From here, we can give a proof by contradiction of the inequality $\varliminf_{p\rightarrow\infty}p^2\b_p>0$, as above.
\end{rem}

\begin{rem}
We shall consider trigonometric polynomials of two variables and degree $(n,m)$:
\begin{eqnarray}
\nonumber
T_{n,m}(x,y)&=&\sum_{j=0}^n\sum_{k=0}^m[a_{j,k}\cos2\pi jx\cos2\pi ky+b_{j,k}\cos2\pi jx\sin2\pi ky\\
\label{Paper5TrigPol}
&+&c_{j,k}\sin2\pi jx\cos2\pi ky+d_{j,k}\sin2\pi jx\cos2\pi ky].
\end{eqnarray}
Oskolkov \cite{Osk1} proved that for any trigonometric polynomial (\ref{Paper5TrigPol}) of degree $(n,m)$ and any $1\le p<\infty$, there holds
\begin{equation}
\label{Paper5Oskolkov}
v_p^{(2)}(T_{n,m})\le A(nm)^{1/p}\|T_{n,m}\|_p,
\end{equation}
where $A$ is an absolute constant. We can obtain (\ref{Paper5Oskolkov}) directly from (\ref{Paper5mainTeo1eq0}).
Indeed, take any trigonometric polynomial $T$ of degree $(n,m)$. The estimate
\begin{equation}
\label{Paper5Osk2}
\o(T;u,v)_p\le\min(uv\|D_1D_2T\|_p,4\|T\|_p),\quad u,v\in[0,1],
\end{equation}
is immediate.
By using (\ref{Paper5Osk2}), we get
\begin{eqnarray}
\nonumber
K_p(T)&\le&2\|D_1D_2T\|_p\int_0^{1/nm}t^{-1/p}dt+4\|T\|_p\int_{1/nm}^1t^{-1/p-1}dt\\
\label{Paper5Osk11}
&\le&2p'(nm)^{1/p-1}\|D_1D_2T\|_p+4p(nm)^{1/p}\|T\|_p.
\end{eqnarray}
It is a simple consequence of Bernstein's inequality (see \cite[p. 97]{DL}) that
\begin{equation}
\label{Paper5Bernstein}
\|D_1D_2T\|_p\le 4\pi^2nm\|T\|_p.
\end{equation}
By (\ref{Paper5Osk11}) and (\ref{Paper5Bernstein}), we get
\begin{equation}
\label{Paper5Osk3}
K_p(T)\le 12\pi^2 pp'(nm)^{1/p}\|T\|_p.
\end{equation}
Similarly, by (\ref{Paper5Osk2}),
\begin{eqnarray}
\nonumber
I_p(T)&\le&\|D_1D_2T\|_p\int_0^{1/n}\int_0^{1/m}u^{1/p}v^{1/p}dvdu\\
\nonumber
&+&4\|T\|_p\int_{1/n}^1\int_{1/m}^1(uv)^{-1/p-1}dvdu\\
\nonumber
&\le& (p')^2(nm)^{1/p-1}\|D_1D_2T\|_p+4p^2(nm)^{1/p}\|T\|_p.
\end{eqnarray}
By the above estimate and (\ref{Paper5Bernstein}), we have
\begin{equation}
\label{Paper5Osk4}
I_p(T)\le 8\pi^2(pp')^2(nm)^{1/p}\|T\|_p.
\end{equation}
Now, (\ref{Paper5Oskolkov}) is derived from (\ref{Paper5mainTeo1eq0}), the estimate $\o(T;1,1)_p\le 4\|T\|_p$, (\ref{Paper5Osk3}) and (\ref{Paper5Osk4}).
\end{rem}

%%%%%%%%%%%%%%%%%%%%%%%%%%%%%%%%%%%%%%%%%%%%%%%%%%%%%%%%%%%%%%%%%%%%%%%%%%%%%%%%%%%%%%%%%%%%%%%%%%%%%%%%%%%%%%%%%%%%%%%%%%%%%%%%%%%%%%%%%%%%%%%%%%%%%%%%%%%%%%%%%%%%%%%%%%%%%%

\section{The mixed norm space $\pvar$}
We first give some terminology from the theory of mixed norm spaces.
Let $X,Y$ be spaces of functions defined on the real line, with norms $\|\cdot\|_X$ and $\|\cdot\|_Y$. A function $f(x,y)$ is said to belong to the symmetric mixed norm space $X\,[\,Y\,]$ if the functions
$$
x\mapsto\|f_x\|_Y\quad{\rm and}\quad y\mapsto\|f_y\|_Y
$$
both belong to the space $X$ (recall that $f_x,f_y$ denote sections).

We shall consider the mixed norm spaces 
$\pvar~~(1\le p<\infty)$, and their relation to other classes, especially $H_p^{(2)}$ (see the Introduction for the definitions).

We observe that mixed norm spaces defined in terms of variation have been considered earlier. For example, Tonelli introduced and studied the space $L^1\,[\,V_1\,]$ in connection with problems of surface area (see \cite{AC1}). Another example is \cite{Go1}, where the space $L^\infty\,[\,V_1\,]$ was shown to be relevant in the study of summability of double Walsh-Fourier series.

We first remark that $\pvar$ is not a vector space for any $1\le p<\infty$.
\begin{prop}
There are two functions $f$ and $g$ such that for any $1\le p<\infty$, we have $f,g\in\pvar$ but $(f+g)\notin\pvar$.
\end{prop}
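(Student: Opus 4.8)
The plan is to exhibit an explicit pair of functions, each a product of a univariate function and a coordinate "indicator-type" function, so that the mixed $p$-variation of the sections is easy to compute, and to arrange that the linear sections of $f$ and $g$ individually have bounded $p$-variation while the sections of $f+g$ do not. The natural device is to take a single univariate function $\varphi\notin V_p$ for all $p$ — for instance something like $\varphi(x)=\sum_k 2^{-k}t_{2^{2^k}}(x)$ or a lacunary sine series with coefficients chosen so that $v_p(\varphi)=\infty$ for every $p\ge 1$ — and to "hide" it in $f$ and in $g$ in such a way that it cancels in one section of $f$ and in one section of $g$, but cannot cancel simultaneously. Concretely, I would try
$$
f(x,y)=\varphi(x)\chi(y)\qquad\text{and}\qquad g(x,y)=-\varphi(x)\bigl(\chi(y)-\eta(y)\bigr),
$$
where $\chi,\eta$ are chosen bounded $1$-periodic functions of bounded $p$-variation (e.g. trigonometric polynomials, or simple step/sawtooth functions), with $\chi$ non-constant; then $f+g=\varphi(x)\eta(y)$. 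The point is to pick $\chi$ and $\eta$ so that the $x$-sections and $y$-sections of $f$ and of $g$ each lie in $V_p$, but some section of $f+g$ equals a nonzero multiple of $\varphi$ and hence fails to be in $V_p$.

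The key steps, in order, are: (1) Fix $\varphi$ with $v_p(\varphi)=\infty$ for all $1\le p<\infty$; one can take a lacunary series and invoke the standard fact that for lacunary sums the $p$-variation is comparable to the $\ell^p$ (or worse) norm of the blocks, which here is chosen to diverge. (2) Choose $\chi,\eta\in V_p$ for all $p$ (finite, bounded). (3) For $f=\varphi(x)\chi(y)$: every $y$-section $f_{y_0}=\chi(y_0)\varphi$ — this is the dangerous one, so instead I would build $f$ so that its $y$-sections are scalar multiples of a \emph{bounded-variation} function, not of $\varphi$; this forces $\varphi$ to sit on the variable whose sections I can control. The cleanest route is therefore to make $\varphi$ appear only through \emph{differences}, i.e. take $f(x,y)=\varphi(x)\sin 2\pi y$ and $g(x,y)=\varphi(x)(\cos 2\pi y-1)$ type combinations is \emph{not} enough because the $x$-sections are still multiples of $\varphi$. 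So the correct construction must make $\varphi$ cancel in \emph{all} sections of $f$ and of $g$ separately. That is exactly what happens if $f$ and $g$ are each of the form (something in $V_p^{(2)}$ that is $0$ on a full grid of sections) — i.e. one uses the freedom in the decomposition $f=\bar f+\phi_1+\phi_2$: let $f=\varphi(x)+\varphi(y)$-type pieces. Indeed, set
$$
f(x,y)=\varphi(x+y),\qquad g(x,y)=-\varphi(x)-\varphi(y)?
$$
— this is the heart of the matter and is where I expect the real work.

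The main obstacle, and the step I would spend the most care on, is precisely choosing the two functions so that \emph{all} linear sections of $f$ and of $g$ individually lie in $V_p$ for every finite $p$, while $f+g$ has a section outside $V_p$. The slick way to do this is to recall that $V_p$ is not a linear space in one variable either is false — $V_p$ \emph{is} a vector space — so the two-variable failure must come genuinely from the interaction of the two section-directions. The construction that works: let $a_1,a_2,\dots$ be points and $h$ a single "bad" univariate function with $v_p(h)=\infty$ for all $p$; define $f$ to be supported so that its $x$-sections are translates/dilates of $h$ arranged to have \emph{uniformly bounded} $p$-variation section-by-section but whose "sum over sections" blows up, and similarly mirror-construct $g$ on the $y$-sections; then in $f+g$ one section picks up the full $h$. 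I would verify the three bounded-variation claims by the product formula (\ref{Paper5Sharpness3})/(\ref{Paper5Sharpness4}) where applicable and by direct estimation otherwise, and verify the failure for $f+g$ by exhibiting a single explicit sequence of partitions along which $v_p((f+g)_{x_0};\Pi)\to\infty$. Once the example is pinned down, all remaining estimates are routine.
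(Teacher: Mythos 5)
Your proposal contains a genuine and fatal gap: the entire strategy is to arrange that some linear section of $f+g$ equals a ``bad'' univariate function $h$ with $v_p(h)=\infty$, while all sections of $f$ and of $g$ individually lie in $V_p$. This is impossible. As you yourself observe in passing, $V_p$ \emph{is} a vector space of univariate functions, and $(f+g)_x=f_x+g_x$, $(f+g)_y=f_y+g_y$; hence if every section of $f$ and every section of $g$ lies in $V_p$, then so does every section of $f+g$, and no section of the sum can ``pick up the full $h$.'' You notice the linearity of $V_p$ but do not draw the conclusion that it kills your approach; instead you cycle through several variants of the same idea (products $\varphi(x)\chi(y)$, combinations $\varphi(x+y)$, translates/dilates of $h$ spread over sections), each of which either fails for the reason above or fails already at the stage of putting $f$ itself into $\pvar$ (e.g.\ for $f(x,y)=\varphi(x)\chi(y)$ the $y$-sections are multiples of $\varphi$). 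You also never commit to a concrete pair $(f,g)$, ending with the admission that the real work remains, so there is no proof to check.

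The mechanism that actually produces the counterexample is different: the failure of linearity must be located in the map $h\mapsto v_p(h)$, not in the space $V_p$. One arranges that every section of $f$, of $g$, \emph{and} of $f+g$ has finite $p$-variation, but that the function $x\mapsto v_p((f+g)_x)$, while everywhere finite, oscillates between two distinct values on interlaced dense sets and therefore fails to belong to $V_p$. The paper does this with spikes on the diagonal: $f(x,y)=1$ for $y=x$ and $0$ otherwise, and $g(x,y)=\pm1$ on the diagonal according to whether $x$ is irrational or rational, and $0$ otherwise. Then every section of $f$ and of $g$ has $p$-variation exactly $2^{1/p}$, so $\varphi_p[f],\psi_p[f],\varphi_p[g],\psi_p[g]$ are constant and trivially in $V_p$; but the diagonal spike of $f+g$ has height $2$ at irrational $x$ and $0$ at rational $x$, so $\varphi_p[f+g]$ takes the values $2^{1+1/p}$ and $0$ on dense complementary sets and has infinite $p$-variation. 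You would need to abandon your infinite-variation-section idea and switch to a construction of this kind.
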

\begin{proof}
Let $f,g$ be functions that are 1-periodic in each variable, and defined as follows on $[0,1]^2$. Let $f(x,y)=1$ if $y=x$ and $f(x,y)=0$ otherwise. Set $g(x,y)=1$ if $y=x$ and $x\notin\Q$, $g(x,y)=-1$ if $y=x$ and $x\in\Q$ and $g(x,y)=0$ otherwise. Then it is easy to see that for any $x,y\in[0,1]$, we have
$$
\varphi_p[f](x)=v_p(f_x)=2^{1/p},\quad\psi_p[f](y)=v_p(f_y)=2^{1/p}.
$$
Since $\varphi_p[f],\psi_p[f]$ are constant, they are of bounded $p$-variation, that is, $f\in\pvar$. In the same way, we have $g\in\pvar$.
On the other hand, 
$$
(f+g)(x,y)=\left\{\begin{array}{ll}
		2 & \mbox{if }\; y=x\;{\rm and}\; x\notin\Q,\\
		0&\mbox{if }\; y=x\;{\rm and }\; x\in\Q,\\
		0&\mbox{otherwise}.
	\end{array}
\right.
$$
Then $\varphi_p[f+g](x)=2^{1+1/p}$ if $x\notin\Q$ and $\varphi_p[f+g](x)=0$ for $x\in\Q$. Clearly $\varphi_p[f+g]\notin V_p$.
\end{proof}

We shall consider relations between $\pvar$ and $H_p^{(2)}$ for $p\ge1$. As mentioned in the Introduction, for $p=1$, we have the strict inclusion $H_1^{(2)}\subset \1var$ (see \cite{AC1}). For $p>1$, no such embeddings hold. In fact, we shall prove that
\begin{equation}
\label{Paper5VpVpHp2rel}
\pvar\not\subset H_p^{(2)}\quad{\rm and}\quad H_p^{(2)}\not\subset\pvar\quad(p>1).
\end{equation}
The first relation is almost obvious.
\begin{prop}
\label{Paper5EasyProp}
Let $1\le p<\infty$, then there is a function $f\in\pvar$ such that $f\notin H_p^{(2)}$.
\end{prop}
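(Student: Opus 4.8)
The plan is to exhibit an explicit $f$ whose linear sections are step functions with at most two jumps per period — so that their $p$-variations are constant up to a bounded discrepancy, which puts $f$ in $\pvar$ — while the mixed second differences of $f$ accumulate along the diagonal, forcing $v_p^{(2)}(f)=\infty$. Concretely, let $f$ be $1$-periodic in each variable and given on the fundamental square by $f(x,y)=1$ if $0\le x\le y<1$ and $f(x,y)=0$ if $0\le y<x<1$; that is, $f$ is the periodization of the indicator of the triangle $\{x\le y\}$ in the unit square.

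First I would compute the sections. For fixed $x\in(0,1)$ the section $f_x$ is the $1$-periodic extension of $\mathbf 1_{[x,1)}$, which has exactly two jumps per period, both of height $1$; hence $v_p(f_x)=2^{1/p}$, while $f_x\equiv 1$ and $v_p(f_x)=0$ when $x\in\Z$. Thus $\varphi_p[f]=2^{1/p}\,\mathbf 1_{\R\setminus\Z}$, and a direct estimate gives $v_p(\varphi_p[f])=4^{1/p}<\infty$. Symmetrically, for every fixed $y$ the section $f_y$ is a $1$-periodic step function with two jumps of height $1$ per period (one at $x\equiv y$, one at $x\in\Z$), so $\psi_p[f]\equiv 2^{1/p}$ and $v_p(\psi_p[f])=0$. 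Therefore $W_p(f)=4^{1/p}<\infty$, i.e. $f\in\pvar$.

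Next I would show $v_p^{(2)}(f)=\infty$; since $H_p^{(2)}\subset V_p^{(2)}$, this gives $f\notin H_p^{(2)}$. For $n\in\N$ take the uniform net $x_i=y_i=i/n$, $0\le i\le n$, and evaluate the mixed differences along the diagonal: for $0\le i\le n-2$ all four sampled points lie in the fundamental square and
$$
\Delta f(x_i,y_i)=f\!\left(\tfrac{i+1}{n},\tfrac{i+1}{n}\right)-f\!\left(\tfrac{i+1}{n},\tfrac{i}{n}\right)-f\!\left(\tfrac{i}{n},\tfrac{i+1}{n}\right)+f\!\left(\tfrac{i}{n},\tfrac{i}{n}\right)=1-0-1+1=1 .
$$
Discarding the remaining (nonnegative) terms of the defining sum, $v_p^{(2)}(f;\mathcal N)^p\ge\sum_{i=0}^{n-2}\bigl|\Delta f(x_i,y_i)\bigr|^p=n-1$, so $v_p^{(2)}(f;\mathcal N)\ge(n-1)^{1/p}\to\infty$ as $n\to\infty$, for every $p\in[1,\infty)$. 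Hence $v_p^{(2)}(f)=\infty$.

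As the statement indicates, there is no real obstacle here — the example is almost immediate. The only thing requiring mild care is the $1$-periodic bookkeeping in the two computations above: the wrap-around at the integers is what produces the single-point drop of $\varphi_p[f]$, and it changes $v_p(\varphi_p[f])$ by only a bounded amount; and in the lower bound for $v_p^{(2)}(f)$ the boundary terms (those with $i=n-1$ or $j=n-1$) are simply discarded, which is harmless since a lower bound is all that is needed.
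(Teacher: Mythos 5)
Your proposal is correct and follows essentially the same route as the paper: both use the periodized indicator of the triangle $\{x\le y\}$ (the paper takes it on $(0,1]^2$, you on $[0,1)^2$) and both force $v_p^{(2)}(f)=\infty$ by summing unit mixed differences along a diagonal net (the paper offsets the $y$-nodes by $1/(2n)$, while your boundary convention lets you use the unshifted diagonal). Your extra care with the integer points, where $\varphi_p[f]$ drops to $0$ and $v_p(\varphi_p[f])=4^{1/p}$ rather than $0$, is a correct handling of a detail the paper's convention avoids.
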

\begin{proof}
Define $f$ on $(0,1]^2$
$$
f(x,y)=\left\{\begin{array}{ll}
		1 & \mbox{if }\; 0<x\le y\le1\\
		0&\mbox{if }\; 0<y<x\le1
	\end{array}
\right.
$$
and extend to the whole plane by periodicity. It is clear that $v_p(f_x)=v_p(f_y)=2^{1/p}$ for all $x,y$. Thus, $f\in\pvar$ for $1\le p<\infty$.

On the other hand, fix $n\in\N$ and let $\mathcal{N}_n=\{(x_i,y_j)\}$, where
$$
x_i=\frac{i}{n}\quad{\rm and}\quad y_j=\frac{j+1/2}{n},\quad 0\le i,j\le n.
$$
Then
$$
|\Delta f(x_i,y_i)|^p=1
$$
for $0\le i\le n-1$, whence, $v_p^{(2)}(f;\mathcal{N}_n)\ge n^{1/p}$. Thus, $f\notin H_p^{(2)}$.
\end{proof}

We proceed to show the second relation of (\ref{Paper5VpVpHp2rel}). We shall use the function
$$
\phi(x)=\inf_{k\in\Z}|x-k|.
$$
For each $n\in\N$, denote $\phi_n(x)=\phi(nx)$. Then,
\begin{equation}
\label{Paper5PhiScale1}
v_p(\phi_n)=2^{1/p-1}n^{1/p}.
\end{equation}

Define 
\begin{equation}
\label{Paper5gnDef}
g_n(x)=\phi(2^nx-1)\chi_{[0,1]}(2^nx-1)\quad{\rm for}\quad x\in[0,1],
\end{equation}
and extend $g_n$ to a 1-periodic function. Set also
\begin{equation}
\label{Paper5funktionDef}
f(x,y)=\sum_{n=1}^\infty2^{-n/p}g_n(x)\phi(2^ny).
\end{equation}
\begin{lem}
\label{Paper5VpVpLemma1}
Let $y',y''\in\R$ and
$$
g(x)=f(x,y'')-f(x,y').
$$
Then
\begin{equation}
\label{Paper5VpVpLemma1eq1}
v_p(g)\le 2^{1/p}\left(\sum_{n=1}^\infty2^{-n}|\phi(2^ny'')-\phi(2^ny')|^p\right)^{1/p}.
\end{equation}
\end{lem}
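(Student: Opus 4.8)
The plan is to exploit the product structure of the summands of $f$. Fix $y',y''\in\R$ and write
$$
g(x)=f(x,y'')-f(x,y')=\sum_{n=1}^\infty 2^{-n/p}g_n(x)\bigl[\phi(2^ny'')-\phi(2^ny')\bigr].
$$
Set $c_n=\phi(2^ny'')-\phi(2^ny')$, so $g(x)=\sum_{n\ge1}2^{-n/p}c_n g_n(x)$. The key observation is that the functions $g_n$ have disjoint "active" supports: by the definition \eqref{Paper5gnDef}, $g_n$ is supported (mod $1$) on the interval $[2^{-n},2\cdot2^{-n}]$ where $\phi(2^nx-1)$ runs through one full triangular bump, and these dyadic intervals $[2^{-n},2^{1-n}]$ are pairwise disjoint subsets of $[0,1]$. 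Hence on any partition $\Pi$ of the period, each increment $g(x_{k+1})-g(x_k)$ is "charged" to at most one value of $n$ (plus possibly some increments that are zero), and the $p$-variation of $g$ splits: $v_p(g)^p \le \sum_{n\ge1} 2^{-n}|c_n|^p\, v_p(g_n)^p$, where $v_p(g_n)$ is the $p$-variation of a single triangular bump of height $1/2$.

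More carefully, the first step is to make the disjoint-support argument rigorous: I would show $v_p(g)^p\le\sum_{n=1}^\infty v_p(2^{-n/p}c_n g_n)^p$. Given any partition $\Pi=\{x_0<\dots<x_N=x_0+1\}$, group its points according to which active interval $[2^{-n},2^{1-n}]$ (mod $1$) they fall into; points outside all active intervals contribute zero increments on both sides of any such point except at the two transition points, and a standard refinement/telescoping bookkeeping shows $\sum_k|g(x_{k+1})-g(x_k)|^p \le \sum_n \bigl(2^{-n/p}|c_n|\bigr)^p v_p(g_n;\Pi_n)^p$ for the induced sub-partitions $\Pi_n$. Taking suprema gives the claimed inequality.

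The second step is to compute $v_p(g_n)$. Since $g_n$ is, over one period, a single triangular bump: it rises linearly from $0$ to $1/2$ and falls back to $0$ (this is $\phi$ rescaled, truncated by $\chi_{[0,1]}(2^nx-1)$, so it equals $\phi(2^nx-1)$ on $[2^{-n},2^{1-n}]$ and $0$ elsewhere on $[0,1]$). For a function that is monotone up then monotone down with total rise $1/2$ and total fall $1/2$, the supremum of $v_p$ over partitions is attained by the coarsest partition hitting the peak, giving $v_p(g_n)=\bigl((1/2)^p+(1/2)^p\bigr)^{1/p}=2^{1/p}\cdot\tfrac12=2^{1/p-1}$; this is the same computation underlying \eqref{Paper5PhiScale1}. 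Therefore $v_p(g_n)=2^{1/p-1}\le 2^{1/p}$ — actually the factor $2^{1/p}$ in \eqref{Paper5VpVpLemma1eq1} comes out exactly if one does not bother to save the factor $1/2$, so I will simply bound $v_p(g_n)\le 2^{1/p}$ (or track the sharp constant; either way \eqref{Paper5VpVpLemma1eq1} follows). Combining with Step 1:
$$
v_p(g)^p\le\sum_{n=1}^\infty 2^{-n}|c_n|^p\, v_p(g_n)^p\le 2\sum_{n=1}^\infty 2^{-n}\bigl|\phi(2^ny'')-\phi(2^ny')\bigr|^p,
$$
and taking $p$-th roots yields \eqref{Paper5VpVpLemma1eq1}.

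The main obstacle is the bookkeeping in Step 1: the disjoint-support splitting of $v_p$ is intuitively clear, but a careful argument must handle partition points lying strictly between consecutive active intervals (where $g$ vanishes, so those increments are harmless) and the fact that an increment straddling the boundary of an active interval still only involves one bump. A clean way to package this is to note that $g$ vanishes on the (open) complement of $\bigcup_n[2^{-n},2^{1-n}]$, insert the endpoints $2^{-n},2^{1-n}$ into any given partition (which only increases $v_p(g;\Pi)$), and then observe that the resulting sum decouples across $n$ because every increment now lies within the closure of a single active interval or within the zero set. Once this is set up, Step 2 is a routine extremal computation for a unimodal function.
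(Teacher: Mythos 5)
There is a genuine gap in your Step 1. The decoupling inequality $v_p(g)^p\le\sum_n v_p(\a_n g_n)^p$ that you want to extract from the disjointness of the supports is \emph{false} for $p>1$ when the coefficients $\a_n=2^{-n/p}[\phi(2^ny'')-\phi(2^ny')]$ have mixed signs --- and in general they do. The specific justification you propose also fails: inserting a point $z$ with $g(z)=0$ between $x_k$ and $x_{k+1}$ replaces $|g(x_{k+1})-g(x_k)|^p$ by $|g(x_k)|^p+|g(x_{k+1})|^p$, and when $g(x_k)$, $g(x_{k+1})$ have opposite signs this is a strict \emph{decrease} (since $(A+B)^p\ge A^p+B^p$ for $p\ge1$); so refining a partition by the endpoints of the active intervals can lower the variational sum rather than raise it. Concretely, take two adjacent bumps with $\a_1=-\a_2$, $|\a_1|=a$, and the partition consisting of the two outer endpoints and the two peaks: the increments are $a/2$, $a$, $a/2$, so $v_p(g;\Pi)^p\ge a^p(1+2^{1-p})$, which exceeds your claimed bound $\sum_n|\a_n|^pv_p(g_n)^p=2^{2-p}a^p$ for every $p>1$ (at $p=2$ this is $1.5\,a^2$ versus $a^2$). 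The inequality of the lemma is nevertheless true, but not by this route.

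The paper's proof repairs exactly this point: it splits $g=h_1+h_2$, where $h_1$ collects the bumps with $\a_n\ge0$ and $h_2$ those with $\a_n<0$. Within each $h_i$ all bumps point the same way, so the extremal-partition argument you have in mind (partition by the endpoints and midpoints of the intervals $I_n$) is valid and gives $v_p(h_i)\le2^{1/p-1}\bigl(\sum|\a_n|^p\bigr)^{1/p}$; the triangle inequality $v_p(g)\le v_p(h_1)+v_p(h_2)$ then yields the stated constant $2^{1/p}$. The loss from $2^{1/p-1}$ to $2^{1/p}$ in the lemma is precisely the price of this sign separation, which your argument tries to avoid but cannot. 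Your Step 2 (the computation $v_p(g_n)=2^{1/p-1}$ for a single triangular bump of height $1/2$) is correct and is the same computation the paper uses.
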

\begin{proof}
For $n\in\N$, set $\a_n=2^{-n/p}[\phi(2^ny'')-\phi(2^ny')]$, then
$$
g(x)=\sum_{n=1}^\infty\a_ng_n(x),
$$
where $g_n$ is given by (\ref{Paper5gnDef}). Set also $I_n=[2^{-n},2^{-n+1}]$ for $n\in\N$.

Let $\s'=\{n\in\N:\a_n\ge0\}$, $\s''=\N\setminus\s'$ and
$$
h_1(x)=\sum_{n\in\s'}\a_ng_n(x)\quad{\rm and}\quad h_2(x)=\sum_{n\in\s''}\a_ng_n(x).
$$
Then $h_1$ is a non-negative continuous function. Moreover, we have
$$
h_1(x)=0\quad{\rm if}\quad x\notin\bigcup_{n\in\s'}I_n,
$$
and $h_1$ is piecewise linear on each interval $I_n~~(n\in\s')$.
It is not difficult to show that the variational sum $v_p(h_1;\Pi)$ is maximal when $\Pi$ consists of the endpoints and midpoints of the intervals $I_n$ ($n\in\s'$). Thus,
$$
v_p(h_1)\le2^{1/p-1}\left(\sum_{n\in\s'}|\a_n|^p\right)^{1/p}.
$$
A similar inequality holds for $v_p(h_2)$. Hence,
$$
v_p(g)\le v_p(h_1)+v_p(h_2)\le 2^{1/p}\left(\sum_{n=1}^\infty|\a_n|^p\right)^{1/p}.
$$
\end{proof}

\begin{teo}
\label{Paper5VpVpTeo1}
Let $1<p<\infty$, then the function $f$ defined by (\ref{Paper5funktionDef}) satisfies $f\in H_p^{(2)}$ and $f\notin\pvar$.
\end{teo}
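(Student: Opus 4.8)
The goal splits into two parts: showing $f \in H_p^{(2)}$ and showing $f \notin \pvar$. For the first part, I would begin by observing that $f$ is a uniformly convergent series of products $2^{-n/p} g_n(x)\phi(2^n y)$, hence continuous and bounded. To control $v_p^{(2)}(f)$, take an arbitrary net $\mathcal{N}$ and estimate $v_p^{(2)}(f;\mathcal{N})$. The natural move is to use the iterated-difference structure: fix $y',y''$ as consecutive $y$-nodes and apply Lemma \ref{Paper5VpVpLemma1} to $g(x) = f(x,y'') - f(x,y')$, which bounds the $x$-variation of each section-difference by $2^{1/p}\bigl(\sum_n 2^{-n}|\phi(2^n y'')-\phi(2^n y')|^p\bigr)^{1/p}$. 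Summing the $p$-th powers over the net in the $y$-direction then reduces matters to controlling $\sum_j \sum_n 2^{-n}|\phi(2^n y_{j+1}) - \phi(2^n y_j)|^p$; interchanging the order of summation, the inner sum over $j$ is $v_p(\phi_n)^p = 2^{1-p} n^{\,0}\cdot 2^{\,?}$ — more precisely by (\ref{Paper5PhiScale1}) it equals $2^{(1-p)} n$… wait, $v_p(\phi_n)^p = (2^{1/p-1} n^{1/p})^p = 2^{1-p} n$, but since $\phi(2^n \cdot)$ restricted to a single period has $p$-variation independent of further refinement one gets a bound like $\sum_n 2^{-n} v_p(\phi_n)^p \le \sum_n 2^{-n} \cdot c\, 2^{?}$, which converges. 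So $v_p^{(2)}(f) < \infty$. For the sections: $f_x(y) = \sum_n 2^{-n/p} g_n(x) \phi(2^n y)$ is, for fixed $x$, a finite or lacunary-type sum whose $p$-variation is again summable by the same geometric argument (at most one or two terms $g_n(x)$ are nonzero for $x$ in a dyadic interval, actually the supports $g_n \ne 0$ require $2^n x \in [1,2]$, so the supports are disjoint!), and $f_y(x) = \sum_n 2^{-n/p}\phi(2^n y) g_n(x)$ has disjoint-support pieces so $v_p(f_y)^p \le \sum_n 2^{-n}|\phi(2^n y)|^p v_p(g_n)^p < \infty$. Hence $f \in H_p^{(2)}$.

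For $f \notin \pvar$, the strategy is to exhibit that $\varphi_p[f](x) = v_p(f_x)$ fails to have bounded $p$-variation. The key point is the disjointness of the supports of the $g_n$: for $x$ with $2^n x - 1 \in [0,1]$, i.e. $x \in [2^{-n}, 2^{-n+1}]$, we have $f_x(y) = 2^{-n/p} g_n(x) \phi(2^n y)$, a single term, so $v_p(f_x) = 2^{-n/p} g_n(x) v_p(\phi_n) = 2^{-n/p} g_n(x)\, 2^{1/p-1} n^{1/p}$ by (\ref{Paper5PhiScale1}). As $x$ ranges over the interval $I_n' = [2^{-n}, 2^{-n+1}]$, the factor $g_n(x) = \phi(2^n x - 1)$ sweeps from $0$ up to $1/2$ and back to $0$ (it is a tent). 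Therefore $\varphi_p[f]$ is a sum of disjointly supported tents of heights $\asymp 2^{-n/p} n^{1/p} = (n/2^n)^{1/p}$ on intervals $I_n'$. The $p$-variation of $\varphi_p[f]$ is then at least $\bigl(\sum_n (\text{height}_n)^p\bigr)^{1/p} \asymp \bigl(\sum_n n/2^n \cdot 2^{?}\bigr)$ — here one must be careful, because $\sum_n 2^{-n} n$ converges, so this naive bound is finite. The resolution: the exponent must be arranged so that the heights are $\asymp n^{1/p}$ (not decaying), which forces revisiting the normalization. Re-examining: $v_p(f_x)^p \asymp 2^{-n} g_n(x)^p n$ and the sum of $p$-th powers of heights over $n$ is $\sum_n 2^{-n} n \cdot 2^{-p}$, finite — so in fact $\varphi_p[f]$ DOES lie in $V_p$, contradiction with the claim. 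Hence the disjoint-support reading must be wrong, and I would instead look for a different net.

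The main obstacle — and where the real work lies — is the construction of a net in the $(x,y)$-plane, or a partition in the $x$-variable for $\varphi_p[f]$, that extracts a \emph{divergent} variational sum despite each individual section looking tame. I expect the correct argument exploits that $\varphi_p[f](x)$ is not the single-term expression above but rather that the dyadic structure causes $\varphi_p[f]$ to oscillate on finer and finer scales: within each $I_n'$ one should further decompose using the self-similar structure of $\phi$, so that $\varphi_p[f]$ restricted to $I_n'$ itself has $p$-variation bounded below by something like $n^{1/p}$ (not $(n 2^{-n})^{1/p}$), and then summing the $p$-th powers over all $n$ diverges since $\sum n = \infty$. Concretely, I would choose points $x$ in $I_n'$ where $g_n(x)$ takes the values $0, 1/2, 0$ in alternation over a refinement, giving a contribution $\asymp 2^{-1}(2^{-n} n)^{1/p}\cdot(\text{number of oscillations})$, and the number of available oscillations within $I_n'$ (using the $2^n$-periodicity of $\phi(2^n y)$ against a matching $x$-grid) is large enough — of order $2^n$ — to make the per-$I_n'$ contribution of order $n^{1/p}$. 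Then $v_p(\varphi_p[f])^p \gtrsim \sum_{n=1}^N n = \infty$, so $\varphi_p[f] \notin V_p$ and $f \notin \pvar$. The delicate part is making the interaction between the $x$-tent $g_n$ and the $y$-oscillation of $\phi(2^n y)$ precise enough to get the counting of oscillations right; everything else is routine estimation via Lemma \ref{Paper5VpVpLemma1} and (\ref{Paper5PhiScale1}).
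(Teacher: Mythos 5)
There are two genuine gaps, one in each half of your argument, and both trace back to the same miscomputation: for $\phi_m(x)=\phi(mx)$, formula (\ref{Paper5PhiScale1}) gives $v_p(\phi_m)=2^{1/p-1}m^{1/p}$, and in the function (\ref{Paper5funktionDef}) the relevant scale is $m=2^n$, so $v_p(\phi(2^n\cdot))=2^{1/p-1}2^{n/p}$ --- \emph{not} $2^{1/p-1}n^{1/p}$ as you write. In the second half this error is fatal to your reading of the construction: with the correct value, on the support interval $[2^{-n},2^{-n+1}]$ of $g_n$ (these supports are indeed essentially disjoint, as you observe) one has $\varphi_p[f](x)=v_p(f_x)=2^{-n/p}g_n(x)\,v_p(\phi(2^n\cdot))=2^{1/p-1}g_n(x)$, so $\varphi_p[f]$ is a sequence of disjointly supported tents of \emph{constant} height $2^{1/p-2}$, vanishing at every dyadic point $2^{-k}$. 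The divergence of $v_p(\varphi_p[f])$ is then immediate from the partition consisting of the points $2^{-k}$ and the midpoints of the intervals $[2^{-k-1},2^{-k}]$; this is exactly the paper's argument. Your "finite sum, hence contradiction" conclusion, and the elaborate salvage you then sketch (extracting $\sim 2^n$ oscillations of $\varphi_p[f]$ inside each $I_n'$), rest on the wrong normalization; moreover that salvage cannot work as described, because $\varphi_p[f]$ restricted to $I_n'$ is a single monotone-up-then-down tent $c\,\phi(2^nx-1)$ and does not oscillate further.

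The first half also does not close as written. Interchanging the order of summation and bounding the inner sum over $j$ by $v_p(\phi(2^k\cdot))^p=2^{1-p}2^k$ gives $\sum_k 2^{-k}\cdot 2^{1-p}2^k=\sum_k 2^{1-p}=\infty$; the "$2^{?}$" you leave unresolved is precisely the point where the naive bound diverges. The paper avoids the interchange entirely: it estimates \emph{per $y$-increment}, using $|\phi(2^ky_{j+1})-\phi(2^ky_j)|\le\min(1,2^k(y_{j+1}-y_j))$ to get
\begin{equation}
\nonumber
\sum_{k=1}^\infty 2^{-k}\,|\phi(2^ky_{j+1})-\phi(2^ky_j)|^p\le c_p\,(y_{j+1}-y_j),
\end{equation}
which is where $p>1$ enters (the geometric sum $2^{-lp}\sum_{k\le l}2^{k(p-1)}\le c_p2^{-l}$), and then sums $\sum_j(y_{j+1}-y_j)=1$ after grouping the increments by dyadic scale. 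You need this per-increment estimate, or something equivalent, to make the first half work. Your remarks on the sections (disjoint supports of the $g_n$, hence $f_x,f_y\in V_p$, so $f\in H_p^{(2)}$ once $f\in V_p^{(2)}$ is known) are fine.
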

\begin{proof}
We first prove that $f\in V_p^{(2)}$. Fix any net 
$$
\mathcal{N}=\{(x_i,y_j):0\le i\le m,0\le j\le n\}.
$$
For each $j\in\{0,1,...,n-1\}$, denote
$$
g_j(x)=f(x,y_{j+1})-f(x,y_j).
$$
Since 
$$
\Delta f(x_i,y_j)=g_j(x_{i+1})-g_j(x_i),
$$
we get
$$
\sum_{i=0}^{m-1}|\Delta f(x_i,y_j)|^p=\sum_{i=0}^{m-1}|g_j(x_{i+1})-g_j(x_i)|^p\le v_p(g_j)^p.
$$
By Lemma \ref{Paper5VpVpLemma1}, we have
$$
v_p(g_j)^p\le2\sum_{k=1}^\infty2^{-k}|\phi(2^ky_{j+1})-\phi(2^ky_j)|^p,
$$
and thus,
$$
v_p^{(2)}(f;\mathcal{N})^p\le2\sum_{j=0}^{n-1}\sum_{k=1}^\infty2^{-k}|\phi(2^ky_{j+1})-\phi(2^ky_j)|^p.
$$
Set $\s_l=\{j:2^{-l-1}<y_{j+1}-y_j\le2^{-l}\}$ for $l\ge0$. Subdividing the above sum, we get
\begin{equation}
\label{Paper5VpVpTeo1eq1}
v_p^{(2)}(f;\mathcal{N})^p\le 2\sum_{l=0}^\infty\sum_{j\in\s_l}\sum_{k=1}^\infty2^{-k}|\phi(2^ky_{j+1})-\phi(2^ky_j)|^p.
\end{equation}
We shall estimate the right-hand side of (\ref{Paper5VpVpTeo1eq1}). Observe that
\begin{equation}
\label{Paper5VpVpTeo1eq2}
|\phi(2^ky_{j+1})-\phi(2^ky_j)|\le\min(1,2^k(y_{j+1}-y_j)).
\end{equation}
Indeed
$$
|\phi(2^ky_{j+1})-\phi(2^ky_j)|\le\|\phi\|_\infty=1/2, 
$$
and, at the same time,
$$
|\phi(2^ky_{j+1})-\phi(2^ky_j)|\le2^k(y_{j+1}-y_j)\|\phi'\|_\infty=2^k(y_{j+1}-y_j).
$$
Fix $l\ge0$ and let $j\in\s_l$. By (\ref{Paper5VpVpTeo1eq2}), we have
\begin{eqnarray}
\nonumber
\sum_{k=1}^\infty2^{-k}|\phi(2^ky_{j+1})-\phi(2^ky_j)|^p&\le&\sum_{k=1}^\infty2^{-k}\min(1,2^{k-l})^p\\
\nonumber
&\le&2^{-lp}\sum_{k=1}^l2^{k(p-1)}+\sum_{k=l+1}^\infty2^{-k}.
\end{eqnarray}
Since $p>1$, it follows that there is a constant $c_p>0$ such that
$$
\sum_{k=1}^\infty2^{-k}|\phi(2^ky_{j+1})-\phi(2^ky_j)|^p\le c_p2^{-l},
$$
for all $j\in\s_l$. Consequently, for $l\ge0$, there holds
\begin{equation}
\label{Paper5VpVpTeo1eq3}
\sum_{j\in\s_l}\sum_{k=1}^\infty2^{-k}|\phi(2^ky_{j+1})-\phi(2^ky_j)|^p\le c_p2^{-l}|\s_l|,
\end{equation}
where $|\s_l|$ denotes the cardinality of the finite set $\s_l$. To sum up, by (\ref{Paper5VpVpTeo1eq1}) and (\ref{Paper5VpVpTeo1eq3}), we have
$$
v_p^{(2)}(f;\mathcal{N})^p\le c_p\sum_{l=0}^\infty2^{-l}|\s_l|\le2c_p\sum_{l=0}^\infty\sum_{j\in\s_l}(y_{j+1}-y_j)=2c_p.
$$
Thus, $f\in V_p^{(2)}$. To show that $f\in H_p^{(2)}$, we must also show the existence of $x_0,y_0\in\R$ such that $f_{x_0},f_{y_0}\in V_p$. For all $x\in\R$ we have $f(x,0)=0$ and thus $f(\cdot,0)\in V_p$. Similarly, $f(1,y)=0$ for all $y\in\R$, so $f(1,\cdot)\in V_p$. Thus, $f\in H_p^{(2)}$. Show that $f\notin\pvar$. First, we observe that $g_n(2^{-k})=0\;(n,k\in\N)$. Thus, $v_p(f_x)=0$ for $x=2^{-k}\;(k\in\N)$. On the other hand, if $x=(2^{-k-1}+2^{-k})/2\;(k\in\N)$, then
$$
f_x(y)=2^{-k/p-2}\phi(2^ky),
$$
and by (\ref{Paper5PhiScale1}), we have
$$
v_p(f_x)=2^{-k/p-2}v_p(\phi_{2^k})=2^{1/p-3}.
$$
Clearly, $\varphi_p[f]\notin V_p$, and this shows that $f\notin\pvar$.
\end{proof}
\begin{rem}
One can show that
$$
H_p^{(2)}\subset L^\infty\,[\,V_p\,],\quad 1\le p<\infty.
$$
Moreover, for $p>1$, we cannot replace the exterior $L^\infty$-norm with any stronger $V_q$-norm for $1\le q<\infty$. Indeed, the construction used to prove Theorem \ref{Paper5VpVpTeo1} actually shows that for $1<p<\infty$ and all $q\ge1$, we have
$$
H_p^{(2)}\not\subset V_q\,[\,V_p\,].
$$
\end{rem}

Finally, we consider the relationship between mixed smoothness and the class $\pvar$. The next lemma will be useful in our investigation.
\begin{lem}
\label{Paper5LemmaDiff}
Let $1\le p<\infty$ and the function $f$ be 1-periodic in both variables.  Let $x',x''\in\R$ be fixed but arbitrary and assume that the $x$-sections $f_{x''},f_{x'}$ belong to $V_p$. Then
\begin{equation}
\label{Paper5VpVpLemEq0}
|v_p(f_{x''})-v_p(f_{x'})|\le 2v_p(g),
\end{equation}
where $g(y)=f(x'',y)-f(x',y)$.
\end{lem}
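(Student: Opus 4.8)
The plan is to reduce the claim to a reverse triangle inequality for the $p$-variation functional applied pointwise to the sections. Fix a partition $\Pi=\{y_0<y_1<\dots<y_n=y_0+1\}$. Since $f_{x''}(y)=f_{x'}(y)+g(y)$, the vector of increments of $f_{x''}$ along $\Pi$ is the sum of the corresponding vectors for $f_{x'}$ and for $g$. Viewing $v_p(\cdot\,;\Pi)$ as the $\ell^p$-norm of the increment vector, the ordinary triangle inequality in $\ell^p$ gives
$$
v_p(f_{x''};\Pi)\le v_p(f_{x'};\Pi)+v_p(g;\Pi)\le v_p(f_{x'})+v_p(g),
$$
and, by symmetry (writing $f_{x'}=f_{x''}-g$),
$$
v_p(f_{x'};\Pi)\le v_p(f_{x''})+v_p(g).
$$

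Taking the supremum over all partitions $\Pi$ on the left-hand sides yields $v_p(f_{x''})\le v_p(f_{x'})+v_p(g)$ and $v_p(f_{x'})\le v_p(f_{x''})+v_p(g)$; note that the right-hand sides are already finite since $f_{x'},f_{x''}\in V_p$ and $g\in V_p$ (the latter because $g=f_{x''}-f_{x'}$ is a difference of two functions of bounded $p$-variation). Combining the two inequalities gives
$$
|v_p(f_{x''})-v_p(f_{x'})|\le v_p(g),
$$
which is in fact the bound $(\ref{Paper5VpVpLemEq0})$ with the sharper constant $1$ in place of $2$; a fortiori the stated estimate holds.

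I do not expect a genuine obstacle here: the only point requiring a word of care is that $v_p(\cdot\,;\Pi)$ must be treated as a bona fide norm on the finite-dimensional space of increment vectors indexed by the partition, so that the triangle inequality applies for every fixed $\Pi$ before passing to the supremum; the supremum of a sum being bounded by the sum of suprema then does the rest. The constant $2$ in the statement presumably leaves room for the cruder argument $|v_p(f_{x''})-v_p(f_{x'})| \le v_p(f_{x''}) + v_p(f_{x'})$ combined with separate estimates, or simply anticipates a less careful application later; either way the clean $\ell^p$-triangle-inequality argument above establishes it.
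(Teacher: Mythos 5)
Your proof is correct, and it takes a cleaner route than the paper while yielding a sharper constant. The paper fixes $d=|v_p(f_{x''})-v_p(f_{x'})|$, assumes (say) $v_p(f_{x''})>v_p(f_{x'})$, picks a near-optimal partition $\Gamma$ with $S(x'';\Gamma)>v_p(f_{x''})-d/2$, deduces $d\le 2|S(x'';\Gamma)-S(x';\Gamma)|$, and only then applies Minkowski's inequality in $\ell^p$ to bound $|S(x'';\Gamma)-S(x';\Gamma)|\le v_p(g)$; the choice of tolerance $d/2$ is what produces the factor $2$ in \eqref{Paper5VpVpLemEq0}. You instead observe directly that for each fixed partition the variational sum is the $\ell^p$-norm of an increment vector depending linearly on the function, so $v_p(\cdot\,;\Pi)$ is subadditive, hence so is its supremum $v_p$; the reverse triangle inequality for this seminorm then gives $|v_p(f_{x''})-v_p(f_{x'})|\le v_p(g)$ with constant $1$, which implies the stated bound a fortiori. (The paper's argument could also be tightened to constant $1$ by taking an arbitrary tolerance $\e>0$ in place of $d/2$ and letting $\e\to0$, so nothing deeper is at stake; but your version avoids the case analysis on the sign of $v_p(f_{x''})-v_p(f_{x'})$ and is the more natural argument. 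Since the lemma is only used later up to absolute constants, the improvement from $2$ to $1$ has no downstream effect.)
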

\begin{proof}
Denote $\varphi(x)=v_p(f_x)$ and
$$
d=|\varphi(x'')-\varphi(x')|.
$$
Observe that for any $x\in\R$ and any partition $\Gamma=\{y_0,y_1,...,y_n\}$, we have
\begin{equation}
\label{Paper5VpVpLemEq1}
\varphi(x)\ge\left(\sum_{j=0}^{n-1}|f(x,y_{j+1})-f(x,y_j)|^p\right)^{1/p}=S(x;\Gamma).
\end{equation}
Assume first that $d>0$ and that $\varphi(x'')>\varphi(x')$. By (\ref{Paper5VpVpLemEq1}), we have
\begin{equation}
\label{Paper5VpVpLemEq2}
0<d\le\varphi(x'')-S(x';\Gamma)
\end{equation}
for any partition $\Gamma$. Choose a partition $\Gamma=\{y_j:0\le j\le n\}$ such that
\begin{equation}
\label{Paper5VpVpLemEq3}
\varphi(x'')-\frac{d}{2}<S(x'';\Gamma).
\end{equation}
By (\ref{Paper5VpVpLemEq2}) and (\ref{Paper5VpVpLemEq3})
$$
d\le\varphi(x'')-S(x';\Gamma)\le S(x'';\Gamma)-S(x';\Gamma)+\frac{d}{2}.
$$
Whence, 
\begin{equation}
\label{Paper5VpVpLemEq4}
d\le 2|S(x'';\Gamma)-S(x';\Gamma)|.
\end{equation}
A simple modification of the above argument yields (\ref{Paper5VpVpLemEq4}) in the case when $\varphi(x'')<\varphi(x')$ as well.

Now, by Minkowski's inequality, we have
\begin{eqnarray}
\nonumber
\lefteqn{|S(x'';\Gamma)-S(x';\Gamma)|\le}\\
\nonumber
&&\le\left(\sum_{j=0}^{n-1}|f(x'',y_{j+1})-f(x',y_{j+1})-f(x'',y_j)+f(x',y_j)|^p\right)^{1/p}\\
\nonumber
&&\le v_p(g).
\end{eqnarray}
Thus, by (\ref{Paper5VpVpLemEq4}), we have $d\le2v_p(g)$.
\end{proof}

Recall the notations (\ref{Paper5Integral2}) and (\ref{Paper5Integral3}).
\begin{teo}
\label{Paper5VpVpTeo2}
Assume that $f\in L^p([0,1]^2)~~(1<p<\infty)$ and that $I_p(f)$ and $J_p(f)$ are finite. Then there exists a continuous function $\bar{f}\in\pvar$ such that $f=\bar{f}$ a.e., and
\begin{equation}
\label{Paper5WpEstim}
W_p(\bar{f})\le A\left[\o(f;1,1)_p+\frac{1}{pp'}K_p(f)+\left(\frac{1}{pp'}\right)^2I_p(f)\right],
\end{equation}
where $A$ is an absolute constant and $W_p(g)$ is given by (\ref{Paper5VpVpDef}).
\end{teo}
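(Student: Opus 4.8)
The plan is to mimic the proof of Theorem~\ref{Paper5MainTeo1}, but to track the one-dimensional $p$-variations of the sections $g_x$ and $g_y$ directly, rather than the Vitali variation $v_p^{(2)}$. By Corollary~\ref{Paper5Cor1}, write $f = g + \phi_1 + \phi_2$ a.e. with $g \in L^p_0([0,1]^2)$ continuous; since $I_p(g) = I_p(f) < \infty$ and (by (\ref{Paper5KpIp2}), (\ref{Paper5KpIp1})) also $J_p(g) < \infty$, the function $g$ is a legitimate continuous representative. Note $\varphi_p[f] = \varphi_p[g]$ and $\psi_p[f] = \psi_p[g]$ since adding $\phi_1(x) + \phi_2(y)$ does not change $v_p(f_x)$ or $v_p(f_y)$ (a univariate additive constant in a section has zero $p$-variation). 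So it suffices to bound $v_p(\varphi_p[g]) + v_p(\psi_p[g])$; I set $\bar f = g$.

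The key step is estimating $v_p(\varphi_p[g])$, where $\varphi_p[g](x) = v_p(g_x)$. Take any partition $\Pi = \{x_0 < x_1 < \dots < x_m = x_0+1\}$ and put $g_i(y) = g(x_{i+1},y) - g(x_i,y)$. By Lemma~\ref{Paper5LemmaDiff}, $|v_p(g_{x_{i+1}}) - v_p(g_{x_i})| \le 2 v_p(g_i)$, hence
\begin{equation}
\nonumber
v_p(\varphi_p[g];\Pi) = \left(\sum_{i=0}^{m-1} |v_p(g_{x_{i+1}}) - v_p(g_{x_i})|^p\right)^{1/p} \le 2\left(\sum_{i=0}^{m-1} v_p(g_i)^p\right)^{1/p}.
\end{equation}
But the quantity $\bigl(\sum_i v_p(g_i)^p\bigr)^{1/p}$ is exactly the object estimated in the proof of Theorem~\ref{Paper5MainTeo1}: applying (\ref{Paper5KLiTeo1eq1}), (\ref{Paper5ViktorLemmaeq2}), (\ref{Paper5Omegap1})--(\ref{Paper5Omegap2}) to each $g_i$ and carrying out the same Minkowski-inequality manipulations (culminating in (\ref{Paper5mainEstim})) gives
\begin{equation}
\nonumber
\left(\sum_{i=0}^{m-1} v_p(g_i)^p\right)^{1/p} \le A\left[\o(g;1,1)_p + \frac{1}{pp'}K_p(g) + \left(\frac{1}{pp'}\right)^2 I_p(g)\right],
\end{equation}
with the bound independent of $\Pi$. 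Taking the supremum over $\Pi$ bounds $v_p(\varphi_p[g])$; an identical argument with the roles of $x$ and $y$ exchanged bounds $v_p(\psi_p[g])$. Adding the two and using $\o(g;u,v)_p = \o(f;u,v)_p$ yields (\ref{Paper5WpEstim}).

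It remains to check that $\varphi_p[g]$ and $\psi_p[g]$ are genuinely in $V_p$ (not merely that their variational sums are bounded along partitions — which is in fact the definition, so this is automatic once the supremum is finite) and that $\bar f = g$ is continuous, which is already supplied by Corollary~\ref{Paper5Cor1}. The main obstacle — really the only one — is to confirm that the estimate from the proof of Theorem~\ref{Paper5MainTeo1} applies verbatim here: there, the chain (\ref{Paper5mainTeo1eq00})--(\ref{Paper5mainEstim}) bounds $v_p^{(2)}(g;\mathcal{N}) \le (\sum_i v_p(g_i)^p)^{1/p}$ and then bounds the right-hand side, so the desired bound on $(\sum_i v_p(g_i)^p)^{1/p}$ is already isolated as an intermediate inequality and can be quoted directly. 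Thus the proof reduces to invoking Lemma~\ref{Paper5LemmaDiff} to pass from section-variations to the differences $g_i$, and then citing the computation inside Theorem~\ref{Paper5MainTeo1}; no genuinely new estimate is needed.
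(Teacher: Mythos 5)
Your handling of the variational estimate itself --- Lemma~\ref{Paper5LemmaDiff} to pass from $|v_p(\bar f_{x_{i+1}})-v_p(\bar f_{x_i})|$ to $2v_p(g_i)$, followed by quoting the intermediate inequality (\ref{Paper5mainEstim}) from the proof of Theorem~\ref{Paper5MainTeo1} --- is exactly the paper's argument and is correct. The gap is in your construction of $\bar f$. You set $\bar f=g$, where $f=g+\phi_1(x)+\phi_2(y)$ a.e.; but then $\bar f$ is not equal to $f$ almost everywhere unless $\phi_1,\phi_2$ vanish, so you have not produced the function the theorem asks for. The justification you give, $\varphi_p[f]=\varphi_p[g]$, is also false: the $x$-section is $f_x(y)=g_x(y)+\phi_1(x)+\phi_2(y)$, and while $\phi_1(x)$ is indeed a constant in $y$, the term $\phi_2(y)$ is not, so in general $v_p(f_x)\neq v_p(g_x)$; symmetrically, $\phi_1$ spoils the $y$-sections. (What \emph{is} true, and what the paper exploits, is that $\phi_2$ cancels in the \emph{differences} of two $x$-sections.)

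This is precisely where the hypothesis $J_p(f)<\infty$ must enter, and your proposal never actually uses it for anything essential (the finiteness of $J_p(g)$ that you invoke already follows from $I_p(f)<\infty$ alone, since $g\in L^p_0([0,1]^2)$). The repair: since $\o(\phi_i;\d)_p\le\o(f;\d)_p$ by Minkowski's inequality, the condition $J_p(f)<\infty$ allows one to apply the one-dimensional results (\ref{Paper5KLiTeo1eq0})--(\ref{Paper5KLiTeo1eq1}) to $\phi_1$ and $\phi_2$ and replace them a.e.\ by continuous functions $\bar\phi_1,\bar\phi_2\in V_p$. Set $\bar f=g+\bar\phi_1+\bar\phi_2$. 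Then $\bar f=f$ a.e., $\bar f$ is continuous, and every section $\bar f_x,\bar f_y$ belongs to $V_p$, so that $\varphi(x)=v_p(\bar f_x)$ and $\psi(y)=v_p(\bar f_y)$ are finite everywhere and their variational sums are meaningful. In the differences $\bar f_i(y)=\bar f(x_{i+1},y)-\bar f(x_i,y)$ the term $\bar\phi_2$ cancels and $\bar\phi_1(x_{i+1})-\bar\phi_1(x_i)$ is a constant, so $v_p(\bar f_i)=v_p(g_i)$, and from that point your argument applies verbatim.
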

\begin{proof}
By Theorem \ref{Paper5MainTeo1}, the condition $I_p(f)<\infty$ implies that 
\begin{equation}
\label{Paper5VpVpTeo2eq1}
f(x,y)=g(x,y)+\phi_1(x)+\phi_2(y),
\end{equation}
for a.e. $(x,y)\in\R^2$, where $g\in H_p^{(2)}$ and $\phi_1,\phi_2$ are given by (\ref{Paper5Phi1}) and (\ref{Paper5Phi2}). Further, it follows directly from the definitions and Minkowski's inequality that $\o(\phi_i;\d)_p\le\o(f;\d)_p$ for $i=1,2$. Then, the condition $J_p(f)<\infty$ implies that there exist continuous functions $\bar{\phi}_1,\bar{\phi}_2\in V_p$ such that
$$
\bar{\phi}_1(x)=\phi_1(x),\quad\bar{\phi}_2(y)=\phi_2(y),
$$
for a.e. $x,y\in\R$ (see Introduction). We now define
$$
\bar{f}(x,y)=g(x,y)+\bar{\phi}_1(x)+\bar{\phi}_2(y),
$$
then $\bar{f}(x,y)=f(x,y)$ for a.e. $(x,y)\in\R^2$. Since $g\in H_p^{(2)}$, all sections $g_x,g_y\in V_p$, and since $\bar{\phi}_i\in V_p~~i=1,2$, it follows that $\bar{f}_x,\bar{f}_y\in V_p$ for all $x,y\in\R$.
Whence, the functions $\varphi(x)=v_p(\bar{f}_x)$ and $\psi(y)=v_p(\bar{f}_y)$ are well-defined and finite. We shall prove that both $v_p(\varphi)$ and $v_p(\psi)$ are estimated by the right-hand side of (\ref{Paper5WpEstim}). We do this only for $v_p(\varphi)$, the argument is the same for $v_p(\psi)$.

Let $\Pi=\{x_0,x_1,...,x_m\}$ be an arbitrary partition. Fix $0\le i\le m-1$, by Lemma \ref{Paper5LemmaDiff}, we have 
$$
|\varphi(x_{i+1})-\varphi(x_i)|\le2v_p(\bar{f}_i),
$$
where $\bar{f}_i(y)=\bar{f}(x_{i+1},y)-\bar{f}(x_i,y)$. At the same time, we have 
$$
\bar{f}_i(y)=g(x_{i+1},y)-g(x_i,y)+\phi_1(x_{i+1})-\phi_1(x_i),
$$
whence $v_p(\bar{f}_i)=v_p(g_i)$, where $g(x_{i+1},y)-g(x_i,y)$. Thus,
$$
v_p(\varphi;\Pi)=\left(\sum_{i=0}^{m-1}|\varphi(x_{i+1})-\varphi(x_i)|^p\right)^{1/p}\le2\left(\sum_{i=0}^{m-1}v_p(g_i)^p\right)^{1/p}.
$$
By this estimate, (\ref{Paper5mainEstim}) and the fact that $\o(g;u,v)_p=\o(f;u,v)_p$, we obtain
$$
v_p(\varphi)\le A\left[\o(f;1,1)_p+\frac{1}{pp'}K_p(f)+\left(\frac{1}{pp'}\right)^2I_p(f)\right].
$$
\end{proof}

\begin{rem}
The condition $J_p(f)<\infty$ of Theorem \ref{Paper5VpVpTeo2} cannot be omitted. Indeed, in general, the condition $I_p(f)<\infty$ does not even imply that $f\in L^\infty([0,1]^2)$ (see the Introduction). On the other hand, $J_p(f)$ does not appear at the right-hand side of the estimate (\ref{Paper5WpEstim}). To clarify this, observe that the condition $J_p(f)<\infty$ is only used to assure that the functions $\phi_1,\phi_2$ of (\ref{Paper5VpVpTeo2eq1}) agree a.e. with functions $\bar{\phi}_1,\bar{\phi}_2\in V_p$. This is necessary in order for $\varphi(x)=v_p(\bar{f}_x)$ and $\psi(y)=v_p(\bar{f}_y)$ to be finite for all $x,y\in\R$.
However, our reasonings show that $\bar{\phi}_1,\bar{\phi}_2$ do not contribute to the estimate of $W_p(\bar{f})$ (we do not enter into details).
\end{rem}

\end{document}